\documentclass{amsart}
\usepackage{lipsum}
\usepackage{amsfonts}
\usepackage{graphicx}
\usepackage{epstopdf}
\usepackage{algorithmic}
\usepackage{hyperref}
\usepackage{float}
\usepackage[caption=false]{subfig}
\usepackage{amssymb}
\usepackage{fancyhdr}

\newcommand{\bn}{\boldsymbol{n}}

\newcommand{\dx}{\,dx}

\newcommand{\ds}{\,ds}

\newcommand{\Rmnum}[1]{\expandafter\@slowromancap\romannumeral #1@}

\newcommand{\R}{\mathbb{R}}
\newcommand{\bC}{\mathbb{C}}

\newcommand{\bA}{\boldsymbol{A}}
\newcommand{\bB}{\boldsymbol{B}}
\newcommand{\bM}{\boldsymbol{M}}
\newcommand{\bH}{\boldsymbol{H}}

\newcommand{\bt}{\boldsymbol{t}}

\newcommand{\bAt}{\boldsymbol{A}}

\newcommand{\et}{e}

\newtheorem{assumption}{Assumption}
\ifpdf
  \DeclareGraphicsExtensions{.eps,.pdf,.png,.jpg}
\else
  \DeclareGraphicsExtensions{.eps}
\fi


\newcommand{\yemeifont}{\fontsize{9pt}{\baselineskip}\selectfont}
\pagestyle{fancy}

\setlength{\skip\footins}{0.5cm}
\fancyhead{}
\fancyhead[LO]{\yemeifont An energy stable and MBP-preserving scheme for TDGL equations}
\fancyhead[RE]{\yemeifont Limin Ma, and Zhonghua Qiao}
\fancyhead[LE]{\yemeifont \thepage}
\fancyhead[RO]{\yemeifont \thepage}
\fancyfoot{}

\newtheorem{remark}{Remark}[section]

\newtheorem{theorem}{Theorem}[section]
\newtheorem{lemma}[theorem]{Lemma}

\theoremstyle{definition}

\title{An energy stable and maximum bound principle preserving scheme for the dynamic Ginzburg--Landau equations under the temporal gauge}
\author{Limin Ma, Zhonghua Qiao}

\address{School of Mathematics and Statistics, Wuhan University, Wuhan, Hubei 430072, China, limin18@whu.edu.cn}
\address{Department of Applied Mathematics, The Hong Kong Polytechnic University, Hung Hom, Kowloon, Hong Kong, zhonghua.qiao@polyu.edu.hk}
\date{}							

\begin{document}
\maketitle 

\begin{abstract}
This paper proposes a decoupled numerical scheme of the time-dependent Ginzburg--Landau equations under the temporal gauge. For the magnetic potential and the order parameter, the discrete scheme adopts the second type Ned${\rm \acute{e}}$lec element and the linear element for spatial discretization, respectively; and a linearized backward Euler method and the first order exponential time differencing method for time discretization, respectively. The maximum bound principle (MBP) of the order parameter and the energy dissipation law in the discrete sense are proved. The discrete energy stability and MBP-preservation can guarantee the stability and validity of the numerical simulations, and further facilitate the adoption of an adaptive time-stepping strategy, which often plays an important role in long-time simulations of
vortex dynamics, especially when the applied magnetic field is strong.  An optimal error estimate of the proposed scheme is also given.
Numerical examples verify the theoretical results of the proposed scheme and demonstrate the vortex motions of superconductors in an external magnetic field.

  \vskip 15pt

\noindent{\bf Keywords. }{Ginzburg--Landau equations, energy stability, maximum bound principle, error estimate, exponential time differencing method}
 \vskip 15pt

\noindent{\bf AMS subject classifications.}
    { 68Q25, 68R10, 68U05}

\end{abstract}

\section{Introduction}
In this paper, we consider the transient behavior and vortex motions of superconductors in an external magnetic field $\bH$ which is described by the time-dependent Ginzburg--Landau (TDGL) model \cite{ginzburg1950theory}. This model was first established in \cite{gor1996generalization} with some detailed descriptions in \cite{chapman1992macroscopic,du1992analysis,tinkham2004introduction}. The TDGL equations in the non-dimensional form satisfy
\begin{equation}\label{model0}\small
\left\{
\begin{aligned}
\left(\partial_{t}+ \mbox{i}\kappa  \phi\right) \psi
+ \left(\frac{ \mbox{i}}{\kappa} \nabla + \boldsymbol{A}\right)^{2} \psi
+ (|\psi|^{2} -1) \psi &= 0& \quad \mbox{in }\ \Omega\times (0,T],\\
\sigma\left(\nabla \phi + \partial_{t} \bA\right)
+ \nabla \times(\nabla \times \boldsymbol{A})
+ Re\left[  \psi^*(\frac{\mbox{i}}{\kappa}\nabla + \bA)\psi  \right]
&=\nabla\times \bH& \quad \mbox{in }\ \Omega\times (0,T],
\end{aligned}
\right.
\end{equation}
with boundary and initial conditions
\begin{equation}\label{model0bc}\small
\left\{
\begin{aligned}
(\nabla \times \boldsymbol{A})\times \boldsymbol{n}&=\boldsymbol{H}\times \boldsymbol{n},
&\quad (\frac{ \mbox{i}}{\kappa} \nabla + \boldsymbol{A}) \psi\cdot \boldsymbol{n} &=0&\text { on } \partial \Omega,
\\
\psi(x,0) &=\psi^0(x),&\quad \bA(x,0)&=\bA^0(x)&\mbox{ on } \Omega,
\end{aligned}
\right.
\end{equation}
where $\Omega$ is a bounded domain in $\mathbb{R}^d(d=2, 3)$, $\bn$ is the unit outer normal vector, the electric potential $\phi$ is a real scalar-valued function, the Ginzburg-Landau parameter $\kappa$ is an important positive material constant representing the ratio of penetration length to the coherence length, the relaxation parameter $\sigma$ is a given positive constant, the magnetic potential~$\bA$ is a real vector-valued function and the order parameter~$\psi$ is a complex scalar-valued function.
Physically speaking, the magnitude of the order parameter $|\psi|$ represents the superconducting density, where $|\psi|=0$ stands for the normal state,  $|\psi|=1$ for the superconducting state, and $0<|\psi|<1$ for a mixed state.
It is proved in \cite{chen1993non} that the order parameter in the TDGL equations \eqref{model0} satisfies the MBP in the sense that the magnitude of the order parameter is bounded by 1, i.e.
\begin{equation}\label{MBPexact}
\|\psi(\cdot, t)\|_{\infty}\le 1,\quad \forall\ t>0
\end{equation}
if the initial condition $\|\psi^0\|_{\infty}\le 1$.
The solution of the corresponding stationary Ginzburg--Landau equations minimizes the Gibbs energy functional \cite{kopnin,etienne2008vortices}
\begin{equation}\label{energy:def}
G(\bA,\psi)=\frac12\|(\frac{\mbox{i}}{\kappa}\nabla+\bA)\psi\|_0^2 + \frac12\|\nabla\times \bA-\bH\|_0^2 + \frac14\||\psi|^2-1\|_0^2.
\end{equation}
As analyzed in \cite{mu1998an},  the energy dissipation law below holds for \eqref{model0}
\begin{equation}\label{energyexact}
\frac{d}{dt}G(\bA,\psi) \le -4\pi(\bM, \partial_t\bH),
\end{equation}
where the magnetization $\bM=\frac{1}{4\pi}(\nabla\times \bA - \bH)$.
Particularly, if the applied magnetic field $\bH$ is stationary, the Gibbs energy of a solution of \eqref{model0} decreases in time.
As stated in \cite{du1994global}, the solution of \eqref{model0} is not unique, that is given any solution $(\psi, \bA, \phi)$, a gauge transformation
$
G_\chi (\psi, \bA, \phi)=(\psi e^{i\kappa \chi}, \bA + \nabla \chi, \phi - \partial_t \chi)
$
gives a class of equivalent solutions sharing the same $|\psi|$ and magnetic induction field $\nabla\times \bA$, which are of physical interests. Although the solutions of \eqref{model0} under different gauges are theoretically equivalent, numerical schemes under various gauges are computationally different. 
The temporal gauge is adopted in the paper since the corresponding TDGL equations can be viewed as a gradient flow and admits the energy dissipation property when $\bH$ is stationary. The existence and uniqueness of the TDGL equations \eqref{model0}-\eqref{model0bc}  were given in \cite{chen1993non,du1994global,li2017mathematical}.

For the TDGL equations, some numerical schemes using finite difference methods for spatial discretization were proposed and analyzed to preserve the discrete MBP and energy bound in \cite{du1998discrete,du2005approximation,gao2019a}. These MBP-preserving finite difference schemes require {uniform or rectangular meshes, and the bound of the discrete energy may be very large in long-time simulations. Numerical schemes using finite element methods for spatial discretization can simulate the motion of superconductors with more general shapes, and are easy to be extended to three-dimensional simulations.
Many finite element based numerical schemes were proposed and analyzed for different gauges, especially the temporal gauge $\phi=0$ (see e.g., \cite{du1994finite,mu1997linearized,mu1998an}) and the Lorentz gauge $\phi=-\nabla\cdot \bA$ (see e.g., \cite{chen1997mixed,gao2014optimal,gao2018analysis,li2017convergence}) under an additional boundary condition. This boundary condition is indispensable to guarantee the wellposedness of the discrete problems and analyze the convergence rate of numerical solutions. However, the regularity of the finite element solution under such boundary conditions is higher than expected, which leads to some nonphysical phenomena if the mesh is not refined enough. Two mixed finite element methods using Hodge decomposition in \cite{li2020a,li2015new} weakly impose this additional boundary condition on the approximation of $\bA$ for the TDGL equations under the Lorentz gauge, which avoid the nonphysical phenomenon to a certain extent  for the TDGL equations in nonconvex polygons.
Recently, a nonlinear numerical scheme with no additional boundary condition was proposed in \cite{duan2022residual,hong2022efficient} for the TDGL equations under the temporal gauge, which resolves physical-interested phenomena on relative coarse meshes.
The energy dissipation law was proved under a strict restriction on time steps in \cite{hong2022efficient}. But no MBP analysis was provided for this scheme.

It is of great importance to analyze the {  MBP~\eqref{MBPexact} and energy dissipation law \eqref{energyexact}} 
for these finite element based schemes in the literature.
{Although the discrete MBP for the TDGL equations is usually observed for finite element based schemes, it has not been proved theoretically.
The magnitude of the discrete order parameter was proved to be bounded above in \cite{mu1998an} under the assumption $\tau\lesssim h^{11\over12}$ and $\tau\lesssim h^2$ in two and three dimensions, respectively.
}
The TDGL equations under the Lorentz gauge cannot be viewed as a gradient flow of the Gibbs energy, and thus the energy stability analysis of numerical schemes concerning this gauge is difficult and the relevant work is very limited in the literature. The boundedness of a modified energy with an extra term $\frac12\|\psi\|_0^2$ was analyzed for the scheme in \cite{li2017mathematical} concerning the Lorentz gauge with the bound depending on the terminal time.
The TDGL equations under the temporal gauge can be viewed as an~$L^2$-gradient flow with respect to $G(\bA,\psi)$ and
	\begin{equation}\label{energyid}
		\frac{d}{dt}G(\bA,\psi) + \|\partial_t \bA\|_0^2 + \|\partial_t\psi\|_0^2=-4\pi(\bM, \partial_t\bH),
	\end{equation}
which benefits the energy stability analysis of numerical schemes under this particular gauge. The discrete energy dissipation law was analyzed for the nonlinear schemes in \cite{du1994finite,hong2022efficient}, where the uniqueness of solution for both schemes requires time step sizes $\tau\lesssim h^{d/2}$ where $d$ is the dimension of space. A modified energy was proved to be bounded in \cite{mu1998an}, where the bound tends to infinity as the perturbed model tends to the original one.

In this paper, we propose a decoupled numerical scheme for the TDGL equations under the temporal gauge
\begin{equation}\label{model0temporal}
\left\{
\begin{aligned}
\partial_{t}\psi
+ \left(\frac{ \mbox{i}}{\kappa} \nabla + \boldsymbol{A}\right)^{2} \psi
+ (|\psi|^{2} -1) \psi &= 0& \quad \mbox{in }\ \Omega\times (0,T]
\\
\sigma \partial_{t} \bA
+ \nabla \times(\nabla \times \boldsymbol{A})
+ Re\left[\psi^*(\frac{\mbox{i}}{\kappa}\nabla + \bA)\psi  \right]
&=\nabla\times \bH& \quad \mbox{in }\ \Omega\times (0,T]
\end{aligned}
\right.
\end{equation}
with boundary and initial conditions \eqref{model0bc}. The scheme employs the lowest order second type Ned${\rm \acute{e}}$lec element and the  linear Lagrange element with mass lumping for finite element discretization of $\bA$ and~$\psi$ in space, respectively. For time discretization, the proposed scheme solves $\bA$ first by the backward Euler method with the nonlinear term treated explicitly, and then $\psi$ by the first order exponential time differencing (ETD) method \cite{beylkin1998new,cox2002exponential,hochbruck2005explicit,hochbruck2010exponential}. The ETD method has been proved to preserve the discrete MBP in many applications, see e.g., \cite{du2019maximum,du2021maximum,ju2022generalized,li2020arbitrarily}. Different from the MBP analysis for real-valued differential equations, the complexity of the order parameter $\psi$ leads to a complex-valued matrix that is not diagonally dominant, and poses difficulty in the MBP analysis for \eqref{model0temporal}. Besides, the highly coupled terms in \eqref{model0temporal} add to the difficulty in analyzing the energy dissipation and error estimate for the proposed decoupled scheme. For the proposed decoupled scheme, we analyze the discrete MBP-preserving property and the discrete energy dissipation law with respect to the original Gibbs energy, and give an optimal error estimate. This is the first finite element based scheme that preserves the strict discrete MBP \eqref{MBPexact} theoretically, and the first decoupled finite element based scheme that admits the discrete energy dissipation law \eqref{energyexact} with respect to the original energy~\eqref{energy:def}.
These stabilities are of great benefit since they allow the application of adaptive time-stepping strategy in \cite{qiao2011adaptive} to significantly speed up long-time simulations.

The rest of the paper is organized as follows. The decoupled numerical scheme is presented in Section~\ref{sec:scheme}.
{ The discrete MBP for the order parameter and an unconditional energy stability are analyzed  in Section~\ref{sec:MBP} and Section~\ref{sec:energy}, respectively.}
The error estimate of the numerical scheme is given in Section~\ref{sec:estimate}. Some numerical experiments are carried out in Section~\ref{sec:numerical} to verify the theoretical results and demonstrate the performance of the proposed scheme in long-time simulations. The paper ends with some concluding remarks in Section~\ref{sec:conclusions}.

\section{Fully discrete scheme for the TDGL equations}\label{sec:scheme}
In this section, we present the fully discrete scheme for \eqref{model0temporal}. Some standard notations are given below. Let $\bC$ be the set of complex numbers, $L^2(\Omega,\mathbb{R})$, and $H^1(\Omega,\mathbb{R})$ be the conventional Sobolev spaces defined on a domain $\Omega\subset \mathbb{R}^d$ ($d=2$ or $3$). For any two complex functions $v$, $w\in L^2(\Omega, \bC)$, denote the $L^2(\Omega, \bC)$ inner product and the norm by
$
(v,w)=\int_\Omega vw^*\dx$, $\|v\|_0^2=\int_\Omega |v|^2\dx,
$
respectively, where $w^*$ is the conjugate of $w$ and $|v|$ is the magnitude of $v$. Denote the complex-valued Sobolev space as
$$
H^1(\Omega, \mathbb{C})=\{\phi=u+\mbox{i}v: u, v\in H^1(\Omega, \mathbb{R})\},
$$
and the vector-valued space with $d$ components as
$$
H({\rm curl})=\{\bB: \bB\in L^2(\Omega,\mathbb{R}^d),\ \nabla\times \bB\in L^2(\Omega,\mathbb{R}^d)\}.
$$
The weak formulation of the TDGL equations \eqref{model0temporal} with boundary conditions \eqref{model0bc} is specified as follows: find $(\bA, \psi)\in H({\rm curl})\times H^1(\Omega, \mathbb{C})$ such that
\begin{equation}\label{weakform}
\left\{
\begin{aligned}
&(\sigma\partial_t \bA,\bB) + D(\psi; \bA, \bB) + (g(\psi), \bB)=(\bH, \nabla\times \bB),& \forall \bB\in H({\rm curl}),\\
&(\partial_t \psi,\phi) + B(\bA;\psi, \phi) - (f_0(\psi), \phi) = 0,&\forall \phi\in H^1(\Omega,\mathbb{C}),
\end{aligned}
\right.
\end{equation}
with $\bA(x, 0)=\bA^0(x)\in H({\rm curl})$ and $\psi(x,0)=\psi^0(x)\in H^1(\Omega,\mathbb{C})$, where
\begin{equation}
\begin{aligned}
D(\psi; \bA, \bB)&=(\nabla\times \bA, \nabla\times \bB)+ (|\psi|^2\bA, \bB),&
g(\psi)&=\frac{\mbox{i}}{2\kappa} (\psi^*\nabla \psi - \psi\nabla \psi^*),
\\
B(\bA;\psi, \phi)&=((\frac{\mbox{i}}{\kappa}\nabla + \bA)\psi, (\frac{\mbox{i}}{\kappa}\nabla + \bA)\phi),&
f_\mu(x)&=(1-|x|^2)x+\mu x
.\label{Bdef}
\end{aligned}
\end{equation}

Let $\mathcal{T}_h$ be a regular partition of $\Omega$, $\mathcal{E}_h$ be the set of all interior edges of $\mathcal{T}_h$, $\bt_e$ be the unit tangent vector of an edge $e\in \mathcal{E}_h$ and $h_K$ be the diameter of element $K\in \mathcal{T}_h$. Define the mesh size $h=\max_{K\in \mathcal{T}_h}h_K$. Let $P_1(K,\mathbb{C})$ be the set of all polynomials with degree not greater than one. Define the linear element space by
\begin{equation*}
V_h=\{\phi_h\in H^1(\Omega, \mathbb{C})\cap C^0(\Omega,\mathbb{C}): \phi_h|_K \in P_1(K,\mathbb{C})\},
\end{equation*}
and the lowest order second type Ned${\rm \acute{e}}$lec element space by
\begin{equation*}
Q_h=\{ \bB_h\in H(\mbox{curl}):  \bB_h|_K\in P_1(K, \mathbb{R}) , \ \int_e \bB_h\cdot \bt_e\,ds \ \mbox{ is continuous on any }\ e\in \mathcal{E}_h\}.
\end{equation*}
Let $\Pi_L$ be the canonical interpolation operator of the linear element, namely
$\Pi_L v(x)=\sum_{i=1}^Nv(x_i)\phi_i(x)$, where $N$ is the number of vertices $\{x_i\}_{i=1}^N$ of $\mathcal{T}_h$, and $\phi_i\in V_h$ is the corresponding basis function with respect to vertex $x_i$ with
$
\phi_i(x_j)=\delta_{ij}.
$
 Let $\omega_i$ be the support of $\phi_i(x)$.
Define a diagonal matrix $D={\rm diag}(d_1,\cdots, d_N)$ with entries $d_i=|\phi_i|_{0,1,\omega_i}$.
Denote the inner product
$
(V,W)_{\ell^2}=W^HDV=\sum_{i=1}^{N}V_iW_i^*|\phi_i|_{0,1,\omega_i}$ for any $V$, $W\in \bC^{N}$,
and the operators $I_h: V_h\rightarrow \bC^N$ and $\Pi_h: \bC^N\rightarrow V_h$ by
$
I_h w=(w(x_1),\cdots,w(x_N))^T$ and $
\Pi_hW=\sum_{i=1}^NW_i\phi_i(x),
$
respectively.
Note that
\begin{equation}\label{L2def}
(I_hv, I_hw)_{\ell^2}=(\Pi_L(vw^*), 1),\quad \|v\|_0\lesssim \|I_hv\|_{\ell^2}\lesssim \|v\|_0,
\end{equation}
where the notation $A\lesssim B$ means that there exists a positive constant $C$, which is independent of the mesh size, such that $A\le CB$.
Define the Ritz projection $R_{h}\bA\in Q_h$ by
\begin{equation}\label{ritz}
(\nabla \times (\bA - R_{h}\bA), \nabla \times \bB_h) + (\bA - R_{h}\bA, \bB_h)=0,\quad \forall \bB_h\in Q_h,
\end{equation}
which admits the following estimates on a convex domain \cite{Peter2003FiniteEM}:
\begin{equation}\label{ritz:err}
\|\nabla\times (I-R_{h})\bA\|_0 + \|(I-R_{h})\bA\|_0\lesssim h(|\bA|_1 + |\nabla\times \bA |_1),
\end{equation}
provided that $\bA, \nabla\times \bA\in H^1(\Omega, \mathbb{R}^d)$ and 
\begin{equation}\label{ritz:err2}
h\|\nabla\times (I-R_{h})\bA\|_0 + \|(I-R_{h})\bA\|_0\lesssim h^2|\bA|_2,
\end{equation}
provided that $\bA\in H^2(\Omega, \mathbb{R}^d)$.
Given a positive integer $K_t$ and time steps $\{\tau_i\}_{i=1}^{K_t}$, we divide the time interval by $\{t_n=\sum_{i=0}^n\tau_i: 0\le n\le K_t\}$ and $T=t_{K_t}$. For any function $F(\cdot,t)$, define $F^n=F(\cdot, t_n)$ and $\partial_t^n F=\partial_t F(\cdot, t_n)$. For any given sequence of functions $\{F^n\}$, denote
$
 d_t^n F= ({F^n-F^{n-1})/\tau_n}.
$

Let  $\bA_h^0 = R_{h}\bA^0$  and $\Psi_h^0 = I_h\psi^0$.
Given the approximation $(\bA_h^{n-1}, \Psi_h^{n-1})\in Q_h\times \mathbb{C}^N$ at the previous time step $t_{n-1}$, we first solve the approximation to $\bA^n$ by applying the backward Euler method for time discretization and treating the nonlinear terms explicitly. That is to find $\bA_h^n\in Q_h$ such that for any $\bB_h\in Q_h$,
\begin{equation}\label{dis1}
(d_t^n \bA_h,\bB_h) + D(\psi^{n-1}_h; \bA_h^n, \bB_h)=(\bH^n,\nabla\times \bB_h) - (g(\psi_h^{n-1}), \bB_h),
\end{equation}
where $\psi_h^{n-1}=\Pi_h\Psi_h^{n-1}$. We adopt the first order exponential time differencing method (ETD1) with stabilization for time discretization of $\psi$ and  the   linear finite element method with mass lumping for spatial discretization by  treating the nonlinear terms $B(\bA;\psi, \phi)$ and $f_0(\psi)$ in \eqref{weakform} explicitly. To be specific, we seek $u_h\in C^1([t_{n-1},t_{n}];V_h)$ such that $\psi_h^n=u_h(\cdot,t_n)\in V_h$ with $u_h(\cdot, t_{n-1})=\psi_h^{n-1}$ such that for any~$\phi_h\in V_h$ and $t\in [t_{n-1}, t_n]$,
\begin{equation*}\label{etd1lumping}
\begin{split}
(\Pi_L(\partial_t u_h\phi_h^*), 1) + B(\bA_h^{n}; u_h, \phi_h) + \mu_n(\Pi_L(u_h \phi_h^*),1)
- (\Pi_L(f_{\mu_n}(\psi_h^{n-1})\phi_h^*),1)  &= 0,
\end{split}
\end{equation*}
where $\mu_n>0$ is the stabilization parameter and $\bA_h^n$ is given by \eqref{dis1}. The matrix form of this formulation reads
\begin{equation}\label{psieq}
\left\{
\begin{aligned}
\frac{d}{dt}U_h(t)&=L_{\mu_n, h}^n U_h(t)+ f_{\mu_n} (\Psi_h^{n-1}),\qquad \forall t\in [t_{n-1}, t_n],
\\
U_h(t_{n-1})&=\Psi_h^{n-1},
\end{aligned}
\right.
\end{equation}
where $U_h(t)=I_hu_h(\cdot, t)\in \bC^N$ and the entries of the complex matrix $L_{\mu_n, h}^n$ are
\begin{equation}\label{ETD:L}
L_{\mu_n, h}^n= D^{-1}\hat L^n - \mu_n I,\quad \mbox{with}\quad (\hat L^n)_{ij} = -B(\bA_h^n; \phi_j, \phi_i).
\end{equation}
Since the diagonal matrix $D$ is positive definite and the Hermitian matrix $\hat L^n$ is negative semi-definite, $L_{\mu_n, h}^n$ is negative definite for any $\mu_n>0$, i.e.
\begin{equation}\label{negativedef}
W^*L_{\mu_n, h}^nW\le -\mu_nW^*W,\quad \forall \ W\in \bC^N.
\end{equation}
An equivalent form of \eqref{psieq} is
\begin{equation}\label{ETDsol}
\begin{split}
\Psi_h^{n}
=&\phi_0(\tau_n L_{\mu_n,h}^n)\Psi_h^{n-1} { + }\tau_n\phi_1(\tau_n L_{\mu_n,h}^n) f_{\mu_n}(\Psi_h^{n-1}),
\end{split}
\end{equation}
where
$\phi_0(a)=e^a$ and $\phi_1(a)={({ e^a-1})/a}$ for $a\neq 0$.
We use the Krylov subspace method in \cite{niesen2012algorithm} to compute the exponential integral in \eqref{ETDsol}.

\section{Discrete energy stability and maximum bound principle} \label{sec:energy0}
In this section, we will show that the proposed scheme \eqref{dis1}-\eqref{psieq} inherits the maximum bound principle~\eqref{MBPexact} and the energy dissipation law~\eqref{energyexact} at the discrete level.

\subsection{Discrete Maximum Bound Principle}\label{sec:MBP}
In this section, we consider the discrete MBP for the complex order parameter $\psi_h$ of the proposed decoupled scheme \eqref{dis1}-\eqref{psieq}. To begin with, we consider an ODE system taking the form
\begin{equation}\label{ETDmodel}
\left\{
\begin{aligned}
&\frac{du}{dt} + \mu u=Lu+N[u] \\
& u(0,x)=u^0(x)
\end{aligned}
\right.
\end{equation}
with real-valued  constant $\mu$, $L$, $N(\xi)=\mu \xi + h(\xi)$.
An analytical framework was established in \cite{du2021maximum} to give some sufficient conditions that lead to the MBP for \eqref{ETDmodel}. This framework can be extended to complex-valued systems, which is presented below.
\begin{lemma}\label{lm:complex}
Given any  real-valued positive constant $\mu$ and $T$, assume that
\begin{enumerate}
\item[(a)] for any $U\in \mathbb{C}^N$, it holds that ${\rm Re}(U^*_i(LU)_i)< 0$ if $|U_i|=\max_{1\le j\le N}|U_j|$;
\item[(b)] there exists $\lambda_0>0$ such that $\lambda_0 I-L$ is reversible;
\item[(c)] $|N(\xi)|\le \mu \beta$ for any $|\xi|\le \beta$ and $|N(\xi_1)-N(\xi_2)|\le 2\mu |\xi_1-\xi_2|$ for any $|\xi_1|\le \beta$ and $|\xi_2|\le \beta$.
\end{enumerate}
If $\|u^0\|_{L^\infty}\le \beta$ and $\mu\ge \max_{|\xi|\le \beta}|h'(\xi)|$,  it satisfies
$\|u(t)\|_{L^\infty}\le \beta$ for any $t \in [0, T]$.
\end{lemma}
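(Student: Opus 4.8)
The plan is to recast the system as $u' = (L-\mu I)u + N[u]$ and to study it through the variation-of-constants (Duhamel) representation
\[
u(t) = e^{(L-\mu I)t}u^0 + \int_0^t e^{(L-\mu I)(t-s)}N[u(s)]\,ds ,
\]
which is a legitimate mild solution because hypothesis (2) guarantees (and in the present finite-dimensional setting trivially provides) that $L$ generates the matrix semigroup $e^{Lt}$, while the Lipschitz bound in hypothesis (3) yields existence and uniqueness of the solution on $[0,T]$. The entire argument then reduces to one contraction estimate for the linear propagator, namely $\|e^{(L-\mu I)t}W\|_\infty \le e^{-\mu t}\|W\|_\infty$ for all $W\in\mathbb{C}^N$ and $t\ge 0$, used together with the uniform bound $|N(\xi)|\le\mu\beta$ from hypothesis (3).

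First I would prove the contraction estimate. Factoring $e^{(L-\mu I)t}=e^{-\mu t}e^{Lt}$, it suffices to show that $e^{Lt}$ is nonexpansive in the $\ell^\infty$ norm. Fix $W$, set $v(t)=e^{Lt}W$ so that $v'=Lv$, and consider $\rho(t)=\|v(t)\|_\infty^2=\max_{1\le i\le N}|v_i(t)|^2$. As the maximum of finitely many $C^1$ functions, $\rho$ is locally Lipschitz and its upper right Dini derivative satisfies $D^+\rho(t)\le \max_{i\in I(t)}\frac{d}{dt}|v_i(t)|^2$, where $I(t)=\{i:|v_i(t)|^2=\rho(t)\}$ is the active set. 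For an active index $i$ one has $\frac{d}{dt}|v_i|^2=2\,{\rm Re}(v_i^*(Lv)_i)$, which is $\le 0$ by hypothesis (1) (strictly negative unless $v=0$). Hence $D^+\rho\le 0$ everywhere, so $\rho$ is nonincreasing and $\|v(t)\|_\infty\le\|W\|_\infty$; restoring the factor $e^{-\mu t}$ gives the claimed bound.

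To avoid the circular a priori assumption that $u$ remains in the ball $\{|\xi|\le\beta\}$ when invoking $|N|\le\mu\beta$, I would truncate the nonlinearity componentwise: set $\tilde N(\xi)=N(\xi)$ for $|\xi|\le\beta$ and $\tilde N(\xi)=N(\beta\xi/|\xi|)$ for $|\xi|>\beta$. Then $\tilde N$ is globally Lipschitz, coincides with $N$ on the ball, and satisfies $|\tilde N(\xi)|\le\mu\beta$ for every $\xi$ by hypothesis (3). The truncated problem has a unique global solution $\tilde u$, and feeding the now unconditional bound $\|\tilde N[\tilde u(s)]\|_\infty\le\mu\beta$ together with the contraction estimate into its Duhamel formula gives, for all $t\in[0,T]$,
\[
\|\tilde u(t)\|_\infty \le e^{-\mu t}\|u^0\|_\infty + \int_0^t e^{-\mu(t-s)}\mu\beta\,ds \le e^{-\mu t}\beta + \beta\bigl(1-e^{-\mu t}\bigr) = \beta .
\]
Since $\tilde u$ never leaves the ball, $\tilde N[\tilde u]=N[\tilde u]$ along its trajectory, so $\tilde u$ also solves the original system; uniqueness then forces $u=\tilde u$ and yields $\|u(t)\|_\infty\le\beta$.

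The main obstacle is the contraction step, i.e. converting the pointwise, coordinatewise sign condition (1) into a genuine $\ell^\infty$ bound on the matrix exponential. The delicate points are the nonsmoothness of $\rho=\max_i|v_i|^2$, handled by the Dini-derivative argument on the active set, and the fact that we work over $\mathbb{C}$, so the dissipativity must be read through ${\rm Re}(v_i^*(Lv)_i)$ rather than through real diagonal dominance. Everything else — the truncation, the elementary integral, and the uniqueness transfer — is routine once this estimate is in hand, and notably the non-strict inequality $D^+\rho\le 0$ already suffices, so no barrier argument exploiting the strictness in (1) is needed.
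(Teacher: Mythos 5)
Your proposal is correct, but it is more self-contained than the paper's treatment and differs in how the key contraction property is obtained. The paper turns hypothesis (1) into the resolvent-type dissipativity estimate $\|(\lambda I-L)U\|_{\ell^\infty}\ge \lambda\|U\|_{\ell^\infty}$ (evaluated at a maximizing index), invokes hypothesis (2) as the range condition of a Lumer--Phillips argument to conclude that $L$ generates a contraction semigroup on $(\mathbb{C}^N,\|\cdot\|_{\ell^\infty})$, and then simply cites the abstract MBP framework of Du--Ju--Li--Qiao for the remaining Duhamel/nonlinearity step. You instead prove the semigroup contraction directly, by the Dini-derivative (Danskin) argument on $\rho(t)=\max_i|v_i(t)|^2$, which never uses hypothesis (2) at all --- correctly observing that generation is automatic for matrices --- and you replace the citation by an explicit truncation argument that simultaneously yields global existence, the bound $\|\tilde u(t)\|_\infty\le\beta$, and the transfer back to the original equation by uniqueness. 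The trade-off is clear: the paper's route (dissipativity plus range condition) is the one that survives passage to infinite dimensions, where (2) is genuinely needed, whereas your route is elementary, fills in the details the paper leaves to a reference, and exposes that (2) is redundant in the finite-dimensional setting of the lemma; your truncation also cleanly removes the circularity that a naive bootstrap of $|N(\xi)|\le\mu\beta$ along the solution would entail. Two cosmetic points: hypothesis (1) as stated is vacuously false at $U=0$ (you implicitly handle this, as does the paper, since only $\le 0$ is needed), and the final identification $u=\tilde u$ requires $N$ to be locally Lipschitz off the ball (true for the cubic nonlinearity in the application) or, alternatively, should be read as defining the solution $u$ whose existence the lemma implicitly assumes.
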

Assumptions (a) and (b) in Lemma \ref{lm:complex} indicate that the linear operator ${ L}$ 
is a generator of a contraction semigroup since assumption (a) implies
\begin{equation*} \label{contraction}
\begin{aligned}
\|(\lambda I-{ L})U\|_{\ell^\infty}^2\ge &|\lambda U_i-({ L}U)_i|^2
\\
=&\lambda^2|U_i|^2 + |({ L}U)_i|^2 -2{\rm Re}(U^*_i(LU)_i)
> \lambda \|U\|_{\ell^\infty}^2.
\end{aligned}
\end{equation*}
Lemma \ref{lm:complex} follows directly from this fact and a similar analysis in \cite{du2021maximum}. The detailed proof is omitted here.

Notice that for real-valued systems, the first assumption reduces to $U_i(LU)_i< 0$ if $|U_i|=\max_{1\le j\le N}|U_j|$ for any $U\in \R^N$, which is exactly the assumption in  \cite{du2021maximum}.
It is widely used  in the MBP analysis of ETD schemes that if all the diagonal entries of a strictly diagonally dominant matrix~$L$ are negative, assumption (a) holds for the real-valued system. For the classic two-dimensional heat equation,  since the sign of diagonal entries $L_{ij}=\int_\Omega \nabla \phi_j\cdot \nabla\phi_i\,dx$ and the corresponding $U_i(LU)_i$ depends on the interior angles, the discrete maximum principle holds for the  mass lumping method in the case that the triangulations contain no obtuse triangles \cite{nie1985a}.

Similarly, if $L$ is a Hermitian matrix with negative entries on the diagonal and strictly diagonally dominant, assumption~(a) still holds.
Although the real part of the Hermitian matrix $L_{\mu_n, h}^n$ is strictly diagonally dominant, the complex-valued off-diagonal entries make the matrix itself not even weakly diagonally dominant. If the triangulation contains some right triangles, the imaginary part of~$L_{\mu_n, h}^n$ will dominate the sign of $Re(U_i^*(L_{\mu_n, h}^nU)_i)$ when the stabilization parameter~$\mu_n$ is of $\mathcal{O}(h^{-1+\alpha})$ with $\alpha>0$. However, the sign of the imaginary part of~$L_{\mu_n, h}^n$ is uncertain, and thus the linear operator~$L_{\mu_n, h}^n$ is not necessarily the generator of a contraction semigroup on a triangulation with right interior angles. Therefore, the discrete MBP is not guaranteed.

To guarantee the discrete MBP of the complex order parameter $\psi_h$, we consider the scheme on triangulations satisfying the following assumption.
\begin{assumption}\label{ass:mesh}
The triangulation is shape regular and quasi-uniform,  where all the interior angles ($d=2$) or dihedral angles of faces ($d=3$) are acute.
\end{assumption}

By Lemma \ref{lm:complex}, the key to analyzing the discrete MBP of the solution to the ETD1 scheme \eqref{psieq} is to prove that $L_{\mu_n, h}^n$ is a generator of a contraction semigroup, namely
\begin{equation}\label{reLnegative}
Re(U_i^*(L_{\mu_n, h}^nU)_i)<0,\quad \mbox{for some }i\in \{1,\cdots,N\}
\end{equation}
holds for any $U\in \mathbb{C}^{N}$.
Note that even though the real part of the matrix $L_{0, h}^n$ is diagonally dominant, the matrix itself is not necessarily weakly diagonally dominant.
To derive the discrete MBP for the proposed scheme, we need to look into the properties of the linear operator $L_{\mu_n, h}^n$.
Denote the  entries of~$L_{\mu_n, h}^n \in \mathbb{C}^{N\times N}$ by $(L_{ij})_{i,j=1}^{N}$ with $L_{ij}=L_{ij}^{\rm re} + \mbox{i}L_{ij}^{\rm im}$ and
\begin{equation}\label{Lij}
\begin{aligned}
L_{ij}^{\rm re}&=\frac{1}{d_i}\left(-\frac{1}{\kappa^2}\int_\Omega \nabla \phi_j\cdot \nabla \phi_i\dx - \int_\Omega |\bA_h^n|^2\phi_i\phi_j\dx - \mu_n \delta_{ij}|\phi_i|_{0,1,\Omega}\right),
\\
L_{ij}^{\rm im}&=\frac{1}{\kappa d_i}\int_\Omega \bA_h^n\cdot (\phi_j\nabla \phi_i - \phi_i\nabla \phi_j)\dx,
\end{aligned}
\end{equation}
where $d_i=|\phi_i|_{0,1,\Omega}$.  It follows Assumption \ref{ass:mesh} that there exist positive constants $C_1$, $\tilde C_2$ and $C_3$, which are independent on the mesh size, such that for any $i\neq j$,
\begin{equation*}\label{meshineq}
\int_\Omega \nabla \phi_j\cdot \nabla \phi_i\dx\le -C_1h^{d-2}, \
|\kappa d_i L_{ij}^{\rm im}|\le  \tilde C_2h^{\frac12(d-2)}\|\bA_h^n\|_{0,\omega_{ij}},\
|\phi_i|_{0,1,\omega_i}\ge C_3h^d,
\end{equation*}
where the second estimate employs the Cauchy Schwarz inequality and  $\omega_{ij}=\overline{\omega_i\cap \omega_j}$ is the intersection of the support of $\phi_i$ and $\phi_j$.
For any $1\le i$, $j\le N$, define vector $\vec{v}_{ij}=(a_{ij}, b_{ij}, c_{ij})$ by
\begin{equation*}
a_{ij}=-\frac{h^{2-d}}{\kappa^2}\int_\Omega \nabla \phi_j\cdot \nabla \phi_i\dx,\quad
b_{ij}=d_i h^{1-d}L_{ij}^{\rm im},\quad
c_{ij}=\mu_n h^{-d} \int_\Omega \phi_i\phi_j\dx,
\end{equation*}
where constant $\mu_n$ is to be determined later.
It follows that each entry of the vector~$\vec{v}_{ij}$ is independent of the mesh size $h$ and
\begin{equation}\label{aest}
a_{ij}\ge \frac{C_1}{\kappa^2},\quad
0\le |b_{ij}|\le \frac{\tilde C_2}{\kappa}h^{-\frac12d}\|\bA_h^n\|_{0,\omega_{ij}},\quad
c_{ij}\ge \mu_nC_3.
\end{equation}
Assumption \ref{ass:mesh} implies that the number of elements sharing the vertices $x_i$ and $x_j$ is bounded above. Thus, there exists a positive constant $C_2$ such that
\begin{equation}\label{sumbsquare}
\sum_{j\neq i}|b_{ij}|^2<  \frac{C_2}{\kappa^2}h^{-d}\|\bA_h^n\|_{0,\omega_i}^2.
\end{equation}
For each element $K$, it holds that $\int_K\phi_i\dx=\frac13|K|$ and $\int_K\phi_i^2\dx=\frac16|K|$. Then
\begin{equation}\label{sumcsquare}
\sum_{j\neq i}c_{ij}=\mu_nh^{-d}\int_\Omega (\phi_i-\phi_i^2)\dx=\frac16\mu_nh^{-d}|\omega_i|.
\end{equation}

The following theorem shows that the operator $L_{\mu_n, h}^n$ is a generator of a contraction semigroup and the discrete MBP holds for $\psi_h$  of \eqref{ETDsol} when the stabilization parameter 
\begin{equation}\label{mucondition}
\mu_n\ge \max_{1\le i\le N}\{\frac{3C_2\|\bA_h^n\|_{0,\omega_i}^2}{C_1|\omega_i|},\frac{3\|\bA_h^n\|_{0,\omega_i}^2}{8|\omega_i|},2\},
\end{equation}
where the constants $C_1$ and $C_2$  are independent of the spatial mesh size $h$ and Ginzburg-Landau parameter $\kappa$, $\omega_i$ is the support of basis function $\phi_i$,  and $\bA_h^n$ is given by \eqref{psieq}.

\begin{theorem}\label{th:MBP1}
Assume that matrix $L_{\mu_n, h}^n$ is assembled with stabilization parameter $\mu_n$ satisfying \eqref{mucondition}  and Assumption~\ref{ass:mesh} holds. Then the discrete MBP holds for $\psi_h$  of \eqref{ETDsol}, i.e.
$$
\|\Psi_h^n\|_{\ell^\infty}\le 1,\quad \mbox{\rm if}\quad \|\psi^0\|_\infty\le 1.
$$
\end{theorem}
\begin{proof}
Define a matrix $T\in \bC^{N\times N}$ with entries $T_{ij}=U_i^*U_j = T_{ij}^{\rm re} + \mbox{i}T_{ij}^{\rm im}$. Then,
\begin{equation}\label{Rest}
\begin{aligned}
Re(U_i^*\sum_{j=1}^{N}L_{ij}U_j)
=&L_{ii}^{\rm re}T_{ii}^{\rm re} + \sum_{j\neq i} (L_{ij}^{\rm re}T_{ij}^{\rm re} - L_{ij}^{\rm im}T_{ij}^{\rm im}).
\end{aligned}
\end{equation}
It follows from $\sum_{i=1}^{N}\phi_i(x)=1$, $\sum_{i=1}^{N}\nabla \phi_i(x)=0$  and  \eqref{Lij} that
\begin{equation}\label{Liir}
L_{ii}^{\rm re}=\frac{1}{d_i}\left(\sum_{j\neq i}\int_\Omega (\frac{1}{\kappa^2}\nabla \phi_j\cdot \nabla \phi_i - \mu_n\phi_i\phi_j)\dx - \int_\Omega |\bA_h^n|^2\phi_i^2\dx - \mu_n |\phi_i|_{0,1,\Omega}\right).
\end{equation}
Substituting \eqref{Lij} and \eqref{Liir} into \eqref{Rest} yields
\begin{equation} \label{Reterms}
Re(U_i^*\sum_{j=1}^{N}L_{ij}U_j)=  \frac{1}{d_i}(R^1_i + R^2_i),
\end{equation}
where the stabilization parameter $\mu_n=\tilde\mu_1+\tilde\mu_2$ is to be determined later and
\begin{equation*}
\begin{aligned}
&R^1_i= -\sum_{j\neq i}\ell_i(U_j; \vec{v}_{ij}), \  \ell_i(U_j; \vec{v}_{ij})=a_{ij} h^{d-2}(T_{ii}^{\rm re} - T_{ij}^{\rm re}) + b_{ij}h^{d-1} T_{ij}^{\rm im} + c_{ij}h^d T_{ii}^{\rm re},
\\
& R^2_i= -\sum_{j=1}^N\int_\Omega |\bA_h^n|^2\phi_i\phi_j\,dx T_{ij}^{\rm re} - \mu_n|\phi_i|_{0,1,\Omega} T_{ii}^{\rm re}.
\end{aligned}
\end{equation*}
Let $(r_j,\theta_j)$ be the polar coordinates of $U_j$. We can find $i\in \{1,\cdots, N\}$ such that
$
r_i=\max_{1\le j\le N}r_j.
$
Then,
$
T_{ii} - T_{ij}=r_i^2 - r_ir_je^{i(\theta_j -\theta_i)}.
$
Note that
\begin{equation*}
\begin{aligned}
\ell_i(U_j; \vec{v}_{ij})&=a_{ij}r_i^2h^{d-2} - a_{ij}r_ir_jh^{d-2}\cos(\theta_j -\theta_i) - b_{ij}r_ir_jh^{d-1}\sin(\theta_j -\theta_i)+c_{ij}r_i^2h^d
\\
&\ge  h^{d-2}(a_{ij}r_i^2 - r_ir_j\sqrt{a_{ij}^2+b_{ij}^2h^2} + c_{ij}r_i^2h^2).
\end{aligned}
\end{equation*}
The inequality \eqref{aest} indicates that $a_{ij}$ is positive. Since $\sqrt{a_{ij}^2 + b_{ij}^2h^2}\le a_{ij} +\frac{b_{ij}^2h^2}{2a_{ij}}$,
\begin{equation}\label{MBP:uj}
\ell_i(U_j; \vec{v}_{ij})\ge a_{ij}r_i^2h^{d-2} - a_{ij}r_ir_jh^{d-2} + (c_{ij}-\frac{b_{ij}^2}{2a_{ij}})r_i^2h^d\ge (c_{ij}-\frac{b_{ij}^2}{2a_{ij}})r_i^2h^d,
\end{equation}
and the equation holds only if $r_i=r_j$ and $b_{ij}=0$. It follows that
\begin{equation}
R^1_i=-\sum_{j\neq i}\ell_i(U_j; \vec{v}_{ij})\le -\sum_{j\neq i}(c_{ij}-\frac{b_{ij}^2}{2a_{ij}})r_i^2h^2.
\end{equation}
When $ \mu_n\ge \frac{3C_2\|\bA_h^n\|_{0,\omega_i}^2}{C_1|\omega_i|}$, it follows \eqref{sumbsquare} and \eqref{sumcsquare} that 
$$
\sum_{j\neq i}\frac{b_{ij}^2}{2a_{ij}}\le \frac{C_2}{2C_1}h^{-d}\|\bA_h^n\|_{0,\omega_{i}}^2< \sum_{j\neq i}c_{ij},
$$
which implies that
\begin{equation}\label{Ri1}
R^1_i=-\sum_{j\neq i}\ell_i(U_j; \vec{v}_{ij}) <0.
\end{equation}
Since  $\phi_i\ge 0$ and $\sum_{j\neq i}\phi_j=1-\phi_i$  and $|\phi_i|_{0,1,\Omega}=\int_\Omega \phi_i\dx=\frac13|\omega_i|$,
\begin{equation*}
\begin{split}
R^2_i&\le \sum_{j\neq i}\int_\Omega |\bA_h^n|^2\phi_i\phi_j\,dx T_{ii}^{\rm re} - \int_\Omega |\bA_h^n|^2\phi_i^2\,dx T_{ii}^{\rm re} -   \mu_n|\phi_i|_{0,1,\Omega} T_{ii}^{\rm re}
\\
&= -2\int_\Omega |\bA_h^n|^2(\phi_i-\frac14)^2\dx T_{ii}^{\rm re}
-(\frac{1}{3}\mu_n|\omega_i| - \frac18\|\bA_h^n\|_{0,\omega_i}^2)\,dx T_{ii}^{\rm re}.
\end{split}
\end{equation*}
It follows from $T_{ii}^{\rm re}>0$ and $\mu_n\ge \frac{3\|\bA_h^n\|_{0,\omega_i}^2}{8|\omega_i|}$ that
$
R^2_i\le 0$.
A substitution of $R^2_i\le 0$ and \eqref{Ri1} into \eqref{Reterms} leads to
${\rm Re}(U_{i}^*\sum_{j=1}^{N}L_{{i}j}U_j)<0$, which verifies the assumption (a) in Lemma \ref{lm:complex}.
The assumption (b) in Lemma \ref{lm:complex} holds following the negative definite property \eqref{negativedef} of the matrix $L_{\mu_n, h}^n$.
For any $x_1, x_2\in \mathbb{C}$ with the magnitude not larger than 1, it is easy to verify that
$
|f_{\mu_n}(x_1)-f_{\mu_n}(x_2)|\le 2\mu_n |x_1-x_2|.
$
As proved in \cite{du2019maximum},
$$
|f_{\mu_n}(x_1)|=f_{\mu_n}(|x_1|)\le \mu_n\quad \mbox{if}\quad \mu_n\ge 2,
$$
which verifies the assumption (c) in Lemma \ref{lm:complex} with $\beta=1$ and completes the proof.
\end{proof}
\begin{remark}
Consider the stabilization parameter $\mu_n$ in  \eqref{mucondition}. 
The value of $\mu_n$ mainly depends on the value of $\displaystyle\max_{1\le i\le N}\frac{\|\bA_h^n\|_{0,\omega_i}^2}{|\omega_i|}$.
Note that $\sum_{i=1}^N\|\bA_h^n\|_{0,\omega_i}^2$ is bounded by a multiple of $\|\bA_h^n\|_{0,\Omega}^2$ from both above and below. This, together with the error estimate in Theorem \ref{th:err1} and the fact that $N=\mathcal{O}(h^{-d})=\mathcal{O}(|\omega_i|^{-1})$,  implies that there exists positive constants $c_1$ and $c_2$ such that
$$
c_1\|\bA(t_n)\|_{0,\Omega}^2\le \frac1N\sum_{i=1}^N\frac{\|\bA_h^n\|_{0,\omega_i}^2}{|\omega_i|}\le c_2\|\bA(t_n)\|_{0,\Omega}^2.
$$
The stabilization parameter $\mu_n$ depends on the maximum of $\frac{\|\bA_h^n\|_{0,\omega_i}^2}{|\omega_i|}$, where its average is bounded by $\|\bA(t_n)\|_{0,\Omega}^2$. Thus, the value of the parameter $\mu_n$ depends on the regularity of $\bA_h^n$, and usually will be bounded when the exact solution $\bA(t_n)$ is not too singular. Note that the approximation $\bA_h^n$ is already known when generating the stabilization parameter $\mu_n$ for the computation of $\Psi_h^n$. Thus, we can always find a stabilization parameter $\mu_n$ satisfying the condition \eqref{mucondition} to guarantee the discrete MBP even if the solution is not smooth.

\end{remark}

\subsection{Discrete energy stability}\label{sec:energy}

Define the discrete energy $G_h^n$ in an analogue form to \eqref{energy:def} by
\begin{equation*}
G_h^n=\frac12\|({i\over \kappa}\nabla +  \bA_h^n)\psi_h^n\|_0^2
+ \frac12\|\nabla\times \bA_h^n - \bH^n\|_0^2
+ \frac14 \||\Psi_h^n|^2-1\|_{\ell^2}^2,
\end{equation*}
and $\bM_h^n=\frac{1}{4\pi}(\nabla\times\bA_h^n-\bH^n)$.
\begin{theorem}\label{th:energy}
For any positive $\{\tau_n\}_{n=1}^{K_t}$, the solution $\{(\bA_h^n, \psi_h^n)\}_{n=0}^{K_t}$ generated by the discrete system \eqref{dis1}-\eqref{psieq} with  stabilization parameter $\mu_n$ satisfying \eqref{mucondition}  satisfies the energy inequality
\begin{equation*}
d_t^nG_h + \|d_t^n\bA_h\|_0^2
+ (\mu_n-1)\tau_n \|d_t^n\Psi_h\|_{\ell^2}^2
\le -4\pi(\bM_h^n, d_t^n\bH),\quad \forall 1\le n\le K_t.
\end{equation*}
Furthermore, if $\bH$ is independent of $t$,
we have
\begin{equation*}
G_h^n\le G_h^{n-1},\quad \forall 1\le n\le K_t,
\end{equation*}
i.e., the proposed scheme is unconditionally energy stable.
\end{theorem}
\begin{proof}
The difference between discrete energies at two consecutive time levels yields
\begin{equation*}
\begin{split}
d_t^nG_h=\frac12d_t^n\|({i\over \kappa}\nabla +  \bA_h )\psi_h \|_0^2
+ \frac12d_t^n\|4\pi\bM_h\|_0^2
+ \frac14d_t^n \||\Psi_h|^2-1\|_{\ell^2}^2.
\end{split}
\end{equation*}
It follows from \eqref{Bdef} that
$$d_t^n\|({i\over \kappa}\nabla +  \bA_h )\psi_h \|_0^2=\frac{1}{\tau_n}(B(\bA_h^n,\psi_h^n,\psi_h^n) - B(\bA_h^{n-1},\psi_h^{n-1},\psi_h^{n-1})),$$ and therefore,
\begin{equation} \label{energy:psi}
\begin{aligned}
\frac12 d_t^n\|({i\over \kappa}\nabla +  \bA_h )\psi_h \|_0^2
= &\frac{1}{2\tau_n}\left(B(\bA_h^n,\psi_h^n,\psi_h^n) - B(\bA_h^{n},\psi_h^{n-1},\psi_h^{n-1})\right)
\\
&
+ \frac12(d_t^n|\bA_h|^2, |\psi_h^{n-1}|^2)
+ (g(\psi_h^{n-1}), d_t^n\bA_h).
\end{aligned}
\end{equation}
Note that $$\frac12 d_t^n|u|^2 ={\rm Re}(u^n,d_t^nu)-\frac{\tau_n}{2}|d_t^nu|^2.$$ Thus,
\begin{equation}\label{iden1}
\begin{aligned}
(\bM^n,d_t^n\bM_h)
=&\frac12d_t^n\|\bM_h\|^2  + \frac{\tau_n}{2}\|d_t^n\bM_h\|^2.
\end{aligned}
\end{equation}
Let $\bB_h=d_t^n \bA_h$ in the scheme \eqref{dis1}. It holds that
\begin{equation*}
\|d_t^n \bA_h\|_0^2 + (4\pi\bM_h^n, 4\pi d_t^n\bM_h + d_t^n\bH) + (|\psi_h^{n-1}|^2\bA_h^n,d_t^n\bA_h)= - (g(\psi_h^{n-1}), d_t^n\bA_h).
\end{equation*}
A summation of \eqref{energy:psi}, \eqref{iden1} and the equation above yields
\begin{equation*}
\begin{split}
&\frac12 d_t^n\|({i\over \kappa}\nabla +  \bA_h )\psi_h \|_0^2
+ \frac12 d_t^n\|4\pi\bM_h\|_0^2
+ \|d_t^n\bA_h\|_0^2
+ \frac{\tau_n}{2}\|4\pi d_t^n\bM_h\|_0^2
\\
=&\frac{1}{2\tau_n}\left(B(\bA_h^n,\psi_h^n,\psi_h^n) - B(\bA_h^{n},\psi_h^{n-1},\psi_h^{n-1})\right)
- (4\pi\bM_h^n, d_t^n\bH)
\\
&
-(|\psi_h^{n-1}|^2\bA_h^n, d_t^n\bA_h)+ \frac12 (d_t^n|\bA_h|^2, |\psi_h^{n-1}|^2).
\end{split}
\end{equation*}
Note that  $$(|\psi_h^{n-1}|^2\bA_h^n, d_t^n\bA_h) - \frac12 (d_t^n|\bA_h|^2, |\psi_h^{n-1}|^2) = \frac{\tau_n}{2}\||\psi_h^{n-1}|d_t^n\bA_h\|_0^2\ge 0$$ and
\begin{equation*}
B(\bA_h^n;\phi_h,\phi_h)=-(I_h\phi_h)^H\hat L^n(I_h\phi_h)= (L_{0,h}^n(I_h\phi_h), I_h\phi_h)_{\ell^2}, \quad \forall \phi_h\in V_h.
\end{equation*}
It follows that
\begin{equation}\label{energy1}
\begin{split}
&\frac12 d_t^n\|({i\over \kappa}\nabla +  \bA_h )\psi_h \|_0^2
+ \frac12 d_t^n\|4\pi\bM_h\|_0^2
+ \|d_t^n\bA_h\|_0^2
+ \frac{\tau_n}{2}\|4\pi d_t^n\bM_h\|_0^2
\\
\le &-\frac{1}{2\tau_n}\left((L_{0,h}^n\Psi_h^n,\Psi_h^n)_{\ell^2} -(L_{0,h}^n\Psi_h^{n-1},\Psi_h^{n-1})_{\ell^2}\right)
- (4\pi\bM_h^n, d_t^n\bH).
\end{split}
\end{equation}
By \eqref{negativedef},
\begin{equation}\label{psi:ineq}
\begin{aligned}
&-\left((L_{0,h}^n\Psi_h^n,\Psi_h^n)_{\ell^2} -(L_{0,h}^n\Psi_h^{n-1},\Psi_h^{n-1})_{\ell^2}\right)
\\
=& -2\tau_n Re(L_{0,h}^n\Psi_h^{n} ,d_t^n\Psi_h)_{\ell^2} + \tau_n^2 Re(L_{0,h}^nd_t^n\Psi_h,d_t^n\Psi_h)_{\ell^2}\\
\le & -2\tau_n Re(L_{0,h}^n\Psi_h^{n} ,d_t^n\Psi_h)_{\ell^2}.
\end{aligned}
\end{equation}
Suppose $a$ and $b$ are complex numbers and $|a|\le 1$, $|b|\le 1$. It holds that
\begin{align*}
\frac14 ((a^2-1)^2 - (b^2-1)^2)
\le &(b^2-1) Re(b^*(a-b)) + (a-b)^*(a-b),
\end{align*}
which implies that for any $\mu_n\ge 1$,
\begin{equation}\label{psidiff}
\frac14 d_t^n\||\Psi_h|^2-1\|_{\ell^2}^2
+ (\mu_n-1)\tau_n\|d_t^n\Psi_h\|_{\ell^2}^2
\le Re(\mu_n \Psi_h^n - f_{\mu_n}(\Psi_h^{n-1}),d_t^n\Psi_h)_{\ell^2}.
\end{equation}
Substituting \eqref{psi:ineq} and \eqref{psidiff} into \eqref{energy1} yields
\begin{equation}\label{ETD1psienergy}
\begin{split}
&d_t^nG_h + \|d_t^n\bA_h\|_0^2
+ (\mu_n-1)\tau_n \|d_t^n\Psi_h\|_{\ell^2}^2
+ \frac{\tau_n}{2}\|d_t^n(\nabla\times \bA_h - \bH)\|_0^2
\\
\le&- Re(f_{\mu_n}(\Psi_h^{n-1}) + L_{\mu_n,h}^n\Psi_h^{n},d_t^n\Psi_h)_{\ell^2}
- (4\pi\bM_h^n, d_t^n\bH).
\end{split}
\end{equation}
The ETD1 scheme in \eqref{ETDsol} indicates that
\begin{align*}
f_{\mu_n}(\Psi_h^{n-1})= &-(1-e^{L_{\mu_n,h}^n\tau_n})^{-1} L_{\mu_n,h}^n (\Psi_h^n - e^{L_{\mu_n,h}^n\tau_n}\Psi_h^{n-1})
\\
= &-(1-e^{L_{\mu_n,h}^n\tau_n})^{-1} L_{\mu_n,h}^n (\Psi_h^n -\Psi_h^{n-1} + (I- e^{L_{\mu_n,h}^n\tau_n})\Psi_h^{n-1})
\\
=&-\tau_n(1-e^{L_{\mu_n,h}^n\tau_n})^{-1} L_{\mu_n,h}^n d_t^n\Psi_h - L_{\mu_n,h}^n\Psi_h^{n-1}.
\end{align*}
Define $g(x)=-x+x/(1-e^{x})$ and the operator $\Delta_1= g_1(L_{\mu_n, h}^n\tau_n)$.  It follows that
\begin{equation*}
\begin{split}
-L_{\mu_n,h}^n\Psi_h^n - f_{\mu_n}(\Psi_h^{n-1})
=\Delta_1(d_t^n\Psi_h).
\end{split}
\end{equation*}
Since $g(x)<0$ for all $x<0$ and $L_{\mu_n,h}^n$ is self-adjoint and negative definite, the operator $\Delta_1$ is also negative definite. Thus,
$$
- Re(f_{\mu_n}(\Psi_h^{n-1}) + L_{\mu_n,h}^n\Psi_h^{n}, d_t^n\Psi_h)\le 0,
$$
which combined with \eqref{ETD1psienergy} gives
\begin{equation*}
d_t^nG_h + \|d_t^n\bA_h\|_0^2
+ (\mu_n-1)\tau_n \|d_t^n\Psi_h\|_{\ell^2}^2
+ \frac{\tau_n}{2}\|4\pi d_t^n\bM_h\|_0^2
\le - (4\pi\bM_h^n, d_t^n\bH).
\end{equation*}
If $\bH$ is stationary, the right-hand side of the above inequality equals zero, which indicates
$
G_h^n \le G_h^{n-1}
$
and completes the proof.
\end{proof}

\section{Error estimate}\label{sec:estimate}
In this section, we analyze the convergence of the numerical solutions by the proposed scheme \eqref{dis1}-\eqref{psieq} under the regularity assumption below.

\begin{assumption}\label{ass:regularity}
Assume that $\Omega$ is a convex polygon (or polyhedron). The solution of the initial boundary value problem \eqref{model0temporal} with \eqref{model0bc} satisfies the regularity conditions
\begin{equation*}
\begin{split}
&\psi,\ \partial_t\psi\in L^\infty(0, T; H^2(\Omega, \mathbb{C})),\quad \bA,\ \partial_t\bA\in L^\infty(0, T; V_A), \\
&\partial_{tt}\bA\in  L^\infty(0, T; H^1(\Omega, \mathbb{R}^d)).
\end{split}
\end{equation*}
where $V_A=\{\bB\in H^1(\Omega, \mathbb{R}^d): \nabla\times \bB\in H^1(\Omega, \mathbb{R}^d)\}$.
\end{assumption}

To begin with, we explore the relation between the errors $e_{\bA}^n$ and $E_{\psi}^n$ at two consecutive time levels by use of the error equations, where
\begin{equation*}
	e_{\bA}^j=\bA_h^j - R_{h}\bA^j,
	\quad E_{\psi}^j=\Psi_h^j- I_h\psi^j,
	\quad e_{\psi}^j=\Pi_hE_{\psi}^j.
\end{equation*}
By the estimate \eqref{ritz:err} and the interpolation error of the linear element
\begin{equation}\label{err:linear}
\|\bA^n - R_h\bA^n\|_0 + h\|\nabla\times (\bA^n - R_h\bA^n)\|_0 + \|\psi - \Pi_L\psi\|_0+h\|\nabla(\psi-\Pi_L\psi)\|_0\lesssim h^2.
\end{equation}

\begin{lemma}
Assume that  Assumption~\ref{ass:mesh} and \ref{ass:regularity} hold. Let $\bA_h^0=R_{h}\bA^0$ and $\Psi_h^0=I_h\psi^0$ with $\|\psi^0\|_\infty\le 1$. The approximation solution $\{(\bA_h^n, \Psi_h^n)\}_{n=1}^{K_t}$ is generated by the numerical scheme \eqref{dis1}-\eqref{psieq} with  stabilization parameter $\mu_n$ satisfying \eqref{mucondition}  and uniform time step $\tau_n=\tau$. For any $1\le n\le K_t$,
\begin{equation}\label{eq:lme}
\begin{split}
\sigma \|e_{\bA}^n\|^2 + 2\tau \|\nabla \times e_{\bA}^n\|_0^2
\le& \sigma (1+C\tau)\|e_{\bA}^{n-1}\|^2
+ \tau\|({\mbox{i}\over \kappa}\nabla + \bA_h^{n-1}) e_{\psi}^{n-1}\|_0^2
\\
&+ C\tau \left(\|E_{\psi}^{n-1}\|_{\ell^2}^2+  h^2+ \tau^2\right),
\end{split}
\end{equation}
\begin{equation}\label{eq:lmecurl}
\begin{split}
\|\nabla \times e_{\bA}^n\|_0^2
\le&  \|\nabla \times e_{\bA}^{n-1}\|_0^2
+ C\tau(\|({\mbox{i}\over \kappa}\nabla + \bA_h^{n-1}) e_{\psi}^{n-1}\|_0^2 + \|e_{\bA}^{n-1}\|_0^2 + \|e_{\bA}^{n}\|_0^2
\\
& + \|E_{\psi}^{n-1}\|_{\ell^2}^2+ h^2+ \tau^2).
\end{split}
\end{equation}
\end{lemma}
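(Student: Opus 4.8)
The plan is to derive both inequalities from the error equations obtained by subtracting the weak form \eqref{weakform} from the discrete scheme \eqref{dis1}, testing against suitable discrete functions, and controlling the nonlinear and consistency terms using the projection/interpolation estimates \eqref{ritz:err}, \eqref{err:linear}, the discrete MBP from Theorem~\ref{th:MBP1} (which gives $\|\Psi_h^{n-1}\|_{\ell^\infty}\le 1$, hence $\|\psi_h^{n-1}\|_\infty\lesssim 1$), and the norm equivalence \eqref{L2def}. First I would write the error equation for $e_{\bA}^n$. Since $R_h$ satisfies \eqref{ritz}, testing the scheme \eqref{dis1} with $\bB_h=e_{\bA}^n\in Q_h$ and subtracting the continuous equation at $t_n$ gives, after inserting the Ritz projection,
\begin{equation*}
\sigma(d_t^n e_{\bA},e_{\bA}^n) + \|\nabla\times e_{\bA}^n\|_0^2 + (|\psi_h^{n-1}|^2\bA_h^n - |\psi^n|^2\bA^n, e_{\bA}^n) = \mathcal{R}_{\bA}^n + (g(\psi_h^{n-1})-g(\psi^n),e_{\bA}^n),
\end{equation*}
where $\mathcal{R}_{\bA}^n$ collects the time-truncation error $\sigma(d_t^n\bA - \partial_t^n\bA)$ and the Ritz-consistency terms, each $O(h^2+\tau)$ by Assumption~\ref{ass:regularity} and \eqref{ritz:err}. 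For the left-hand time term I would use the identity $(d_t^n e_{\bA},e_{\bA}^n)=\tfrac{1}{2\tau}(\|e_{\bA}^n\|^2-\|e_{\bA}^{n-1}\|^2)+\tfrac{\tau}{2}\|d_t^n e_{\bA}\|^2$ to produce the telescoping structure of \eqref{eq:lme}.

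The crux is the treatment of the two nonlinear terms. For the cubic coupling term I would split $|\psi_h^{n-1}|^2\bA_h^n - |\psi^n|^2\bA^n$ into $|\psi_h^{n-1}|^2 e_{\bA}^n$ plus lower-order pieces, absorbing the first into the coercivity via the MBP bound $|\psi_h^{n-1}|\le 1$ and estimating the remainder by $\|\psi_h^{n-1}\|_\infty, \|\psi^n\|_\infty$ and the interpolation/time errors. The gradient term $g(\psi_h^{n-1})-g(\psi^n)$ is the delicate one: writing $g(\psi)=\tfrac{\mathrm i}{2\kappa}(\psi^*\nabla\psi-\psi\nabla\psi^*)=-\tfrac{1}{\kappa}\mathrm{Im}(\psi^*\nabla\psi)$, I would rewrite the difference so that the factor $(\tfrac{\mathrm i}{\kappa}\nabla+\bA_h^{n-1})e_\psi^{n-1}$ appears, which is exactly the quantity carried on the right-hand side of \eqref{eq:lme}. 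Here $e_\psi^{n-1}=\Pi_h E_\psi^{n-1}$, and the combination $\|(\tfrac{\mathrm i}{\kappa}\nabla+\bA_h^{n-1})e_\psi^{n-1}\|_0^2$ must be isolated rather than bounded by $\|\nabla e_\psi^{n-1}\|_0$, precisely because the low regularity of $\bA_h$ and $\nabla\psi_h$ forbids a clean $H^1$ estimate on $e_\psi$. The remaining differences involving $\psi^n-\psi^{n-1}$ (time error) and $\psi^{n-1}-I_h\psi^{n-1}$ (interpolation error) contribute $O(\tau^2)$ and $O(h^2)$ after Cauchy--Schwarz and Young's inequality, matching the $C\tau(h^2+\tau^2)$ term.

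For the curl estimate \eqref{eq:lmecurl} I would instead test the $\bA$-error equation with $\bB_h = d_t^n e_{\bA}=(e_{\bA}^n-e_{\bA}^{n-1})/\tau$, so that the curl term yields $\tfrac{1}{2\tau}(\|\nabla\times e_{\bA}^n\|_0^2-\|\nabla\times e_{\bA}^{n-1}\|_0^2)$ plus a nonnegative remainder, again telescoping. The nonlinear and consistency terms are handled as before, but now tested against $d_t^n e_{\bA}$; the factor $\tau$ is recovered by absorbing $\|d_t^n e_{\bA}\|_0^2$ back into the $\sigma\|d_t^n e_{\bA}\|^2$ produced on the left, which is why both $\|e_{\bA}^{n-1}\|_0$ and $\|e_{\bA}^{n}\|_0$ appear on the right of \eqref{eq:lmecurl}. \textbf{The main obstacle} I anticipate is the nonlinear gradient term: reorganizing $g(\psi_h^{n-1})-g(\psi^n)$ so that only the gauge-covariant derivative $(\tfrac{\mathrm i}{\kappa}\nabla+\bA_h^{n-1})e_\psi^{n-1}$ survives as an un-absorbed term, while every genuinely new piece (those involving $\bA$-errors or the difference $\bA_h^{n-1}-\bA^n$ multiplying $\nabla\psi$) is shown to be of controllable order despite the absence of an a priori $H^1$ bound on $e_\psi$. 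This step is what forces the two lemma inequalities to be coupled through the covariant-derivative quantity rather than decoupled, and it is the reason the subsequent Gronwall argument must combine \eqref{eq:lme} and \eqref{eq:lmecurl} with the $\psi$-error estimate.
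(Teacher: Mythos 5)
Your proposal is correct and follows essentially the same route as the paper: an error equation obtained via the Ritz projection \eqref{ritz}, tested with $\bB_h=e_{\bA}^n$ for \eqref{eq:lme} and with $\bB_h=d_t^n e_{\bA}$ for \eqref{eq:lmecurl}, with the discrete MBP ($\|\psi_h^{n-1}\|_\infty\le 1$, $\|e_\psi^{n-1}\|_\infty\le 2$) controlling the nonlinear terms and the gauge-covariant quantity $\|(\frac{\mbox{i}}{\kappa}\nabla+\bA_h^{n-1})e_\psi^{n-1}\|_0$ deliberately left intact on the right-hand side. The only cosmetic difference is that the paper first merges the two nonlinearities into the single covariant expression ${\rm Re}[\psi^*(\frac{\mbox{i}}{\kappa}\nabla+\bA)\psi]$ before decomposing, whereas you split the cubic and gradient terms and recombine pieces afterward to produce the covariant derivative --- the resulting estimates are the same.
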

\begin{proof}

By the definition of the Ritz projection $R_{h}$ in  \eqref{ritz} and \eqref{dis1},
\begin{equation}\label{Aidentity}
\begin{split}
&\sigma (d_t^n e_{\bA}, \bB_h)
+ (\nabla \times e_{\bA}^n, \nabla \times \bB_h)
\\
&+  (Re[(\psi_h^{n-1})^*({\mbox{i}\over \kappa}\nabla + \bA_h^n) \psi_h^{n-1} - (\psi^{n})^*({\mbox{i}\over \kappa}\nabla + \bA^n) \psi^{n}], \bB_h)
\\
=& \sigma (\partial_t^n \bA - d_t^n \bA, \bB_h)
+ \sigma (  (I -R_{h})d_t^n\bA, \bB_h)
- ((I- R_{h})\bA^n, \bB_h).
\end{split}
\end{equation}
Since $$\partial_t^n \bA - d_t^n \bA=\frac{1}{\tau}\int_{t_{n-1}}^{t_n} \partial_t^n\bA -\partial_t\bA(s)\ds,$$
\begin{equation}\label{Aest1term}
|(\partial_t^n \bA - d_t^n \bA, \bB_h)|\lesssim \tau\|\bB_h\|_0.
\end{equation}
By the estimate \eqref{ritz:err},
\begin{equation}\label{Aest2term}
|\sigma((I -R_{h})d_t^n\bA, \bB_h)| + |((I- R_{h})\bA^n, \bB_h)|\lesssim h\|\bB_h\|_0.
\end{equation}
Note that
\begin{equation*}\label{deco}
\begin{aligned}
&(\psi_h^{n-1})^*({\mbox{i}\over \kappa}\nabla + \bA_h^n) \psi_h^{n-1} - (\psi^{n})^*({\mbox{i}\over \kappa}\nabla + \bA^n) \psi^{n}
\\
=&(e_{\psi}^{n-1})^*({\mbox{i}\over \kappa}\nabla + \bA^n) \Pi_L\psi^{n-1}
+ (\psi_h^{n-1})^*({\mbox{i}\over \kappa}\nabla + \bA^n) e_{\psi}^{n-1}
+ (\psi_h^{n-1})^*(\bA_h^n - \bA^n) \psi_h^{n-1}
\\
&
+ (\Pi_L\psi^{n-1})^*({\mbox{i}\over \kappa}\nabla + \bA^n) \Pi_L\psi^{n-1}
- (\psi^{n})^*({\mbox{i}\over \kappa}\nabla + \bA^n) \psi^{n},
\end{aligned}
\end{equation*}
where Assumption \ref{ass:regularity} and the error estimates in \eqref{err:linear} and \eqref{ritz:err} imply that
\begin{equation*}
\begin{aligned}
&|(e_\psi^{n-1})^*({\mbox{i}\over \kappa}\nabla + \bA^n) \Pi_L\psi^{n-1},\bB_h)|
\lesssim \|e_{\psi}^{n-1}\|_0\|\bB_h\|_0,
\\
&|((\psi_h^{n-1})^*({\mbox{i}\over \kappa}\nabla + \bA^n) e_\psi^{n-1},\bB_h)|
\le \|({\mbox{i}\over \kappa}\nabla + \bA^n) e_{\psi}^{n-1}\|_0\|\bB_h\|_0\|\psi_h^{n-1}\|_\infty,
\\
&|((\psi_h^{n-1})^*(\bA_h^n - \bA^n) \psi_h^{n-1},\bB_h)|
\le (\|e_{\bA}^n\|_0 + Ch)\|\bB_h\|_0\|\psi_h^{n-1}\|_\infty^2,
\\
&|((\Pi_L\psi^{n-1})^*({\mbox{i}\over \kappa}\nabla + \bA^n) \Pi_L\psi^{n-1}
- (\psi^{n})^*({\mbox{i}\over \kappa}\nabla + \bA^n) \psi^{n}, \bB_h)|
\lesssim (\tau + h)\|\bB_h\|_0.
\end{aligned}
\end{equation*}
By Theorem \ref{th:MBP1}, $\|\psi_h^{n-1}\|_\infty\le 1$. It follows that
\begin{equation}\label{realest}
\begin{aligned}
&|(Re[(\psi_h^{n-1})^*({\mbox{i}\over \kappa}\nabla + \bA_h^n) \psi_h^{n-1} - (\psi^{n})^*({\mbox{i}\over \kappa}\nabla + \bA^n) \psi^{n}], \bB_h)|
\\
\le&
\left(\|({\mbox{i}\over \kappa}\nabla + \bA^n) e_{\psi}^{n-1}\|_0
+ \|e_{\bA}^n\|_0
+ C\|e_{\psi}^{n-1}\|_0+ C\tau + Ch\right)\|\bB_h\|_0.
\end{aligned}
\end{equation}
It follows from $\| e_{\psi}^{n-1}\|_\infty \le \|I_h\psi^{n-1}\|_\infty + \|\psi_h^{n-1}\|_\infty\le 2$ and \eqref{ritz:err} that
\begin{equation}\label{extra_1}
\begin{aligned}
\|({\mbox{i}\over \kappa}\nabla + \bA^n) e_{\psi}^{n-1}\|_0
\le& \|({\mbox{i}\over \kappa}\nabla + \bA_h^{n-1}) e_{\psi}^{n-1}\|_0
+ \|(\bA^n - \bA_h^{n-1}) e_{\psi}^{n-1}\|_0
\\
\le &\|({\mbox{i}\over \kappa}\nabla + \bA_h^{n-1}) e_{\psi}^{n-1}\|_0
+ 2\|e_{\bA}^{n-1}\|_0 + C(\tau + h).
\end{aligned}
\end{equation}
Let $\bB_h=e_{\bA}^n$ in \eqref{Aidentity}. By Young's inequality, a combination of \eqref{Aidentity}, \eqref{Aest1term}, \eqref{Aest2term}, \eqref{realest}  and \eqref{extra_1} leads to
\begin{equation*}
\begin{split}
&\sigma \|e_{\bA}^n\|^2 + 2\tau \|\nabla \times e_{\bA}^n\|_0^2
\\
\le& \sigma (1+C\tau)\|e_{\bA}^{n-1}\|^2
+ \tau \|({\mbox{i}\over \kappa}\nabla + \bA_h^{n-1}) e_{\psi}^{n-1}\|_0^2
+ C\tau \left( \|e_{\psi}^{n-1}\|_0^2+ \tau^2 + h^2\right).
\end{split}
\end{equation*}
Let $\bB_h=d_t^ne_{\bA}$ in \eqref{Aidentity}. A similar analysis yields
\begin{equation*}
\begin{split}
\|\nabla \times e_{\bA}^n\|_0^2
\le&  \|\nabla \times e_{\bA}^{n-1}\|_0^2
+ C\tau(\|({\mbox{i}\over \kappa}\nabla + \bA_h^{n-1}) e_{\psi}^{n-1}\|_0^2
+ \|e_{\bA}^{n-1}\|_0^2 + \|e_{\bA}^{n}\|_0^2
\\
& + \|e_{\psi}^{n-1}\|_0^2+ \tau^2 + h^2),
\end{split}
\end{equation*}
which completes the proof.
\end{proof}

Given any $\mu\ge 0$ and $\bB\in H^1(\Omega)$, denote the linear  operator $L_{\mu}[\bB]\psi=-(\frac{\mbox{i}}{\kappa}\nabla + \bB)^2\psi - \mu\psi$. The matrix $L_{\mu_n,h}^n$ in \eqref{psieq} relates to a spatial discretization of the operator $L_{\mu_n}[\bA^n]$.
Let $S_\psi(t)=U_h(t)- \Psi(t)$ with $U_h$ defined in \eqref{psieq} and $\Psi(t)= I_h\psi(\cdot, t)$. A subtraction of \eqref{psieq} from \eqref{model0temporal} reads
\begin{equation}\label{Sheq2}
\left\{
\begin{aligned}
&\frac{d}{dt}S_\psi = L_{\mu_n,h}^n S_\psi + \delta_n^1 + \delta_n^2 + \delta_n^3 + f_{\mu_n}(\Psi_h^{n-1}) - f_{\mu_n} (I_h\psi^{n-1}), t\in [t_{n-1}, t_n],\\
&S_\psi(t_{n-1}) = E_\psi^{n-1},
\end{aligned}
\right.
\end{equation}
where
\begin{equation}\label{deltadef}
\begin{aligned}
\delta_n^1&=L_{\mu_n,h}^n I_h\psi - I_hL_{\mu_n}[\bA^n]\psi, &
\delta_n^2&=I_h(L_{\mu_n}[\bA^n] - L_{\mu_n}[\bA])\psi,
\\
\delta_n^3&=f_{\mu_n}(I_h\psi^{n-1}) - f_{\mu_n} (I_h\psi).&
\end{aligned}
\end{equation}
The first term $\delta_n^1$ represents the consistency error of the numerical scheme \eqref{psieq} and the other two terms relate to the error in time discretization.

\begin{lemma}\label{lm:delta}
Under Assumption \ref{ass:regularity}, it holds for any $W_h\in \mathbb{C}^N$ that
\begin{align*}
|(\delta_n^1, W_h)_{\ell^2} |\lesssim (\|e_{\bA}^{n}\|_0 + h)(\|(\frac{i}{\kappa}\nabla + \bA_h^{n})\Pi_hW_h\|_0 + \|\Pi_hW_h\|_0).
\end{align*}
\end{lemma}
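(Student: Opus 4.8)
The plan is to estimate the consistency error $\delta_n^1 = L_{\mu_n,h}^n I_h\psi - I_h L_{\mu_n}[\bA^n]\psi$ by exploiting the fact that the matrix $L_{\mu_n,h}^n$ is, by construction \eqref{ETD:L}, the lumped-mass finite element discretization of the continuous operator $L_{\mu_n}[\bA_h^n]$, while $I_h L_{\mu_n}[\bA^n]\psi$ involves the exact operator applied to the exact solution and then interpolated. The first step is to rewrite $(\delta_n^1, W_h)_{\ell^2}$ in a form amenable to finite element analysis. Using the identity \eqref{L2def} relating the discrete $\ell^2$ inner product to the lumped $L^2$ inner product, and recalling from \eqref{ETD:L} that $(\hat L^n)_{ij} = -B(\bA_h^n;\phi_j,\phi_i)$, I would express $(L_{\mu_n,h}^n I_h\psi, W_h)_{\ell^2}$ as $-B(\bA_h^n; \Pi_L\psi, \Pi_h W_h) - \mu_n(\Pi_L(\Pi_L\psi\cdot(\Pi_hW_h)^*),1)$, where $\Pi_L\psi = \Pi_h I_h\psi$. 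The key move is then to compare this discrete bilinear form against the term coming from $I_h L_{\mu_n}[\bA^n]\psi$, which after pairing with $W_h$ in the $\ell^2$ inner product produces $-(\Pi_L((\frac{i}{\kappa}\nabla+\bA^n)^2\psi\cdot(\Pi_hW_h)^*),1) - \mu_n(\Pi_L(\psi(\Pi_hW_h)^*),1)$.

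\textbf{Decomposition of the error.} Next I would split $\delta_n^1$ into three conceptually distinct pieces: (i) the difference between the continuous operator $(\frac{i}{\kappa}\nabla+\bA^n)^2$ acting on the exact $\psi$ versus the interpolant $\Pi_L\psi$ (an interpolation/approximation error), (ii) the difference between using the vector potential $\bA^n$ versus the discrete $\bA_h^n$ inside the operator (which will produce the $\|e_{\bA}^n\|_0$ term after writing $\bA^n - \bA_h^n = (\bA^n - R_h\bA^n) - e_{\bA}^n$ and using \eqref{ritz:err}), and (iii) the mass-lumping error comparing $(\Pi_L(\cdot),1)$ against the exact $L^2$ inner product, together with an integration-by-parts step that moves the second-order operator onto the test function. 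For piece (i) I would integrate by parts to convert $((\frac{i}{\kappa}\nabla+\bA^n)^2\psi, \Pi_hW_h)$ into $((\frac{i}{\kappa}\nabla+\bA^n)\psi, (\frac{i}{\kappa}\nabla+\bA^n)\Pi_hW_h)$ modulo boundary terms that vanish by the boundary condition in \eqref{model0bc}, then apply the linear interpolation estimate \eqref{err:linear} on $\psi - \Pi_L\psi$ and its gradient; this yields an $O(h)$ contribution against $\|(\frac{i}{\kappa}\nabla+\bA_h^n)\Pi_hW_h\|_0 + \|\Pi_hW_h\|_0$ after replacing $\bA^n$ by $\bA_h^n$ in the norm on $W_h$ (again absorbing the difference via $\|e_{\bA}^n\|_0 + h$ and $\|\Pi_hW_h\|_\infty$-type bounds, using the MBP/regularity bounds on $\psi$).

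\textbf{Handling the vector-potential difference.} For piece (ii), the substitution $\bA^n \to \bA_h^n$ in the operator generates cross terms of the form $((\bA^n-\bA_h^n)\cdot(\text{gradient or }\bA\text{-term})\psi, \Pi_hW_h)$. Each such term is bounded in $L^2$ by $\|\bA^n - \bA_h^n\|_0$ times the relevant norm of $\Pi_hW_h$, using $\|\psi\|_\infty$ and $\|\nabla\psi\|_\infty$ from Assumption~\ref{ass:regularity} and the MBP bound $\|\psi_h\|_\infty\le 1$ from Theorem~\ref{th:MBP1}; writing $\bA^n - \bA_h^n = (\bA^n - R_h\bA^n) - e_{\bA}^n$ and invoking \eqref{ritz:err} then gives exactly the $\|e_{\bA}^n\|_0 + h$ prefactor. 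The mass-lumping error in piece (iii) is controlled by the standard quadrature estimate $|(\Pi_L(vw),1) - (v,w)|\lesssim h^2|v|_1|w|_1$ for piecewise linears, contributing at worst $O(h)$ against the $H^1$-type norm of $\Pi_hW_h$, which is itself bounded by $\|(\frac{i}{\kappa}\nabla+\bA_h^n)\Pi_hW_h\|_0 + \|\Pi_hW_h\|_0$ via an inverse-free gauge-invariant triangle inequality.

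\textbf{Main obstacle.} I expect the principal difficulty to be controlling the test-function side uniformly: the claimed bound measures $W_h$ through the \emph{gauge-invariant} seminorm $\|(\frac{i}{\kappa}\nabla+\bA_h^n)\Pi_hW_h\|_0$ rather than a plain $H^1$ norm, so every integration by parts and every mass-lumping comparison must be arranged so that the gradient of $\Pi_hW_h$ only ever appears in the combination $\frac{i}{\kappa}\nabla\Pi_hW_h + \bA_h^n\Pi_hW_h$. This requires care in choosing which potential ($\bA^n$ vs.\ $\bA_h^n$) sits next to $\nabla\Pi_hW_h$ at each step, and in absorbing the discrepancy $(\bA^n - \bA_h^n)\Pi_hW_h$ back into the two allowed norms without losing a power of $h$ or picking up an uncontrolled $\|\nabla\Pi_hW_h\|_0$ that the estimate does not permit. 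Managing this gauge-invariant bookkeeping, together with verifying that the boundary terms from integration by parts genuinely vanish under \eqref{model0bc}, is where the real work lies; the interpolation and quadrature estimates themselves are routine.
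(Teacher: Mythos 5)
Your proposal follows essentially the same route as the paper's proof: the same rewriting of both terms via \eqref{L2def} and \eqref{ETD:L}, the same integration by parts using the boundary condition \eqref{model0bc}, and the same splitting into interpolation errors (the paper's $I_1$, $I_2$, $I_5$), potential-difference terms bounded via $\|\bA^n-\bA_h^n\|_0\le\|e_{\bA}^n\|_0+Ch^2$ from \eqref{ritz:err} (the paper's $I_3$, $I_4$), and a mass-lumping error. The only cosmetic difference is that the paper bounds the lumping error directly by $h\|(\frac{i}{\kappa}\nabla+\bA^n)^2\psi\|_1\|\Pi_hW_h\|_0$, i.e.\ against the plain $L^2$ norm of the test function, rather than routing it through an $H^1$-type norm of $\Pi_hW_h$ and a gauge-invariant triangle inequality as you do.
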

\begin{proof}
Let  $w_h=\Pi_hW_h$. It follows from \eqref{L2def} that
\begin{equation}\label{psi:constency0}
|(I_hL_{\mu_n}[\bA^n]\psi, W_h)_{\ell^2}
- (L_{\mu_n}[\bA^n]\psi, w_h)|\lesssim h\|(\frac{\mbox{i}}{\kappa}\nabla + \bA^n)^2\psi\|_1\|w_h\|_0.
\end{equation}
By the definition of $L_{\mu_n,h}[\bA_h^{n}]$ in \eqref{ETD:L},
\begin{equation*}
(L_{\mu_n,h}^n I_h\psi, W_h)_{\ell^2}= -((\frac{\mbox{i}}{\kappa}\nabla + \bA_h^{n})\Pi_L\psi, (\frac{\mbox{i}}{\kappa}\nabla + \bA_h^{n})w_h)
- \mu_n(\Pi_L\psi, w_h).
\end{equation*}
It follows from the above equation and the integration by parts that
\begin{equation}\label{psi:constency}
\begin{aligned}
&(L_{\mu_n,h}^n I_h\psi, W_h)_{\ell^2}
- (L_{\mu_n}[\bA^{n}]\psi, w_h)
=&\sum_{i=1}^5I_i,
\end{aligned}
\end{equation}
where $I_1= (\frac{\mbox{i}}{\kappa}\nabla (\psi-\Pi_L\psi), t_h)$, $I_2= (\bA^n(\psi - \Pi_L\psi), t_h)$, $I_3= ((\bA^n - \bAt_h^n)\Pi_L\psi, t_h)$, $I_4= ((\frac{\mbox{i}}{\kappa}\nabla + \bA^n)\psi, (\bA^n - \bA_h^{n})w_h)$ and $I_5=\mu_n((I-\Pi_L)\psi,w_h)$ with $t_h=(\frac{\mbox{i}}{\kappa}\nabla + \bA_h^{n})w_h$.
It follows from the estimate \eqref{err:linear} that
\begin{equation}\label{err:con1}
\begin{aligned}
|I_1| + |I_2|  + |I_3| + |I_4| + |I_5|
\lesssim (\|\et_{\bA}^{n}\|_0 + h)\|t_h\|_0 + (\|\et_{\bA}^{n}\|_0 + h^2)\|w_h\|_0.
\end{aligned}
\end{equation}
This, together with \eqref{psi:constency0} and \eqref{psi:constency}
, leads to
\begin{equation*}
\begin{aligned}
\left| (\delta_n^1, W_h)_{\ell^2}\right|
\lesssim &(\|e_{\bA}^{n}\|_0 + h)\|(\frac{i}{\kappa}\nabla + \bA_h^{n})w_h\|_0 + (\|e_{\bA}^{n}\|_0 + h)\|w_h\|_0,
\end{aligned}
\end{equation*}
which completes the proof.
\end{proof}

\begin{lemma}\label{lm:ETD1err}
Assume that  Assumption~\ref{ass:mesh} and \ref{ass:regularity} hold. Let $\bA_h^0=R_{h}\bA^0$ and $\Psi_h^0=I_h\psi^0$ with $\|\psi^0\|_\infty\le 1$. $\{(\bA_h^n, \Psi_h^n)\}_{n=1}^{K_t}$ is generated by the discrete system \eqref{dis1}-\eqref{psieq} with  the stabilizing parameter  stabilization parameter $\mu_n$ satisfying \eqref{mucondition} and time step $\tau_n=\tau$. For any $1\le n\le K_t$,
\begin{equation}\label{eq:lms}
\|E_\psi^{n}\|_{\ell^2}^2 + \tau \|(\frac{i}{\kappa}\nabla + \bA_h^{n})e_\psi^{n}\|_0^2
\le (1+C\tau)\|E_\psi^{n-1}\|_{\ell^2}^2 + C\tau(\|e_{\bA}^{n}\|_0^2 + \tau^2 + h^2).
\end{equation}
\end{lemma}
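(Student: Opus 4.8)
The plan is to run an energy estimate on the error ODE \eqref{Sheq2} over a single step $[t_{n-1},t_n]$, using that $S_\psi(t_{n-1})=E_\psi^{n-1}$ and $S_\psi(t_n)=E_\psi^n$. First I would pair \eqref{Sheq2} with $S_\psi$ in the discrete inner product $(\cdot,\cdot)_{\ell^2}$ and take real parts, giving $\tfrac12\tfrac{d}{dt}\|S_\psi\|_{\ell^2}^2=\mathrm{Re}(L_{\mu_n,h}^nS_\psi,S_\psi)_{\ell^2}+\mathrm{Re}(R,S_\psi)_{\ell^2}$, where $R$ collects $\delta_n^1+\delta_n^2+\delta_n^3$ together with the nonlinear difference $f_{\mu_n}(\Psi_h^{n-1})-f_{\mu_n}(I_h\psi^{n-1})$. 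Writing $L_{\mu_n,h}^n=L_{0,h}^n-\mu_nI$ and using $(L_{0,h}^nW,W)_{\ell^2}=-\|(\tfrac{\mathrm{i}}{\kappa}\nabla+\bA_h^n)\Pi_hW\|_0^2$ (equivalently $-(L_{0,h}^nW,W)_{\ell^2}=B(\bA_h^n;\Pi_hW,\Pi_hW)$) along with \eqref{negativedef}, the linear term supplies the full dissipation $-\|(\tfrac{\mathrm{i}}{\kappa}\nabla+\bA_h^n)\Pi_hS_\psi\|_0^2-\mu_n\|S_\psi\|_{\ell^2}^2$.

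Next I would bound $\mathrm{Re}(R,S_\psi)_{\ell^2}$ term by term. For the consistency error I would invoke Lemma \ref{lm:delta} with $W_h=S_\psi$, controlling $|(\delta_n^1,S_\psi)_{\ell^2}|$ by $(\|e_{\bA}^n\|_0+h)$ times $\|(\tfrac{\mathrm{i}}{\kappa}\nabla+\bA_h^n)\Pi_hS_\psi\|_0+\|\Pi_hS_\psi\|_0$; Young's inequality then hides the gradient factor inside the dissipation and leaves only $C(\|e_{\bA}^n\|_0^2+h^2+\|S_\psi\|_{\ell^2}^2)$. The terms $\delta_n^2$ and $\delta_n^3$ are $O(\tau)$ in $\ell^2$ by Assumption \ref{ass:regularity} (they measure the variation of $\bA$ and of $\psi$ across one step), contributing $C\tau^2+\|S_\psi\|_{\ell^2}^2$. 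The decisive term is the nonlinear difference: I would split $f_{\mu_n}=\mu_n\,\mathrm{id}+h$ with $h(x)=(1-|x|^2)x$, so its linear part equals $\mu_nE_\psi^{n-1}$ and, after Young, the resulting $\tfrac{\mu_n}{2}\|S_\psi\|_{\ell^2}^2$ is absorbed by the $\mu_n\|S_\psi\|_{\ell^2}^2$ dissipation, leaving only $\tfrac{\mu_n\tau}{2}\|E_\psi^{n-1}\|_{\ell^2}^2=C\tau\|E_\psi^{n-1}\|_{\ell^2}^2$. The remainder $h(\Psi_h^{n-1})-h(I_h\psi^{n-1})$ is exactly where the maximum bound principle is needed: by Theorem \ref{th:MBP1} and the continuous bound $\|\psi^{n-1}\|_\infty\le1$, both arguments lie in the closed unit disk, on which $h$ has an $O(1)$ Lipschitz constant \emph{independent of} $\mu_n$, so this piece is controlled by $C(\|E_\psi^{n-1}\|_{\ell^2}^2+\|S_\psi\|_{\ell^2}^2)$.

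I would then integrate the differential inequality over $[t_{n-1},t_n]$. Since $e_{\bA}^n$, $\tau$, $h$ and $E_\psi^{n-1}$ are constant on the interval, the forcing yields $C\tau(\|e_{\bA}^n\|_0^2+\tau^2+h^2)+C\tau\|E_\psi^{n-1}\|_{\ell^2}^2$ plus several $\int_{t_{n-1}}^{t_n}\|S_\psi\|_{\ell^2}^2\,dt$ terms; a one-step Gronwall argument bounds $\sup_{[t_{n-1},t_n]}\|S_\psi\|_{\ell^2}^2$ by $C(\|E_\psi^{n-1}\|_{\ell^2}^2+\tau^2+h^2+\|e_{\bA}^n\|_0^2)$, turning those into $O(\tau)$ contributions. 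This already produces the $\|E_\psi^n\|_{\ell^2}^2$ part of \eqref{eq:lms} together with the \emph{time-integrated} dissipation $\int_{t_{n-1}}^{t_n}\|(\tfrac{\mathrm{i}}{\kappa}\nabla+\bA_h^n)\Pi_hS_\psi\|_0^2\,dt$ on the left.

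The main obstacle is the final step: upgrading this integrated dissipation to the \emph{endpoint} term $\tau\|(\tfrac{\mathrm{i}}{\kappa}\nabla+\bA_h^n)e_\psi^n\|_0^2$ demanded by \eqref{eq:lms}, the term that later telescopes against $\tau\|(\tfrac{\mathrm{i}}{\kappa}\nabla+\bA_h^{n-1})e_\psi^{n-1}\|_0^2$ on the right of \eqref{eq:lme}. Because $L_{0,h}^n$ is self-adjoint negative definite and frozen on the interval, the quantity $B_n(t)=\|(\tfrac{\mathrm{i}}{\kappa}\nabla+\bA_h^n)\Pi_hS_\psi(t)\|_0^2=-(L_{0,h}^nS_\psi,S_\psi)_{\ell^2}$ decays along the homogeneous linear flow, so $\tau B_n(t_n)\le\int_{t_{n-1}}^{t_n}B_n\,dt$ up to the influence of the forcing. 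The delicate point, and the step I expect to need the most care, is to control the forcing's effect on the growth of $B_n$ (notably $\delta_n^1$, whose $\ell^2$ norm carries an inverse power of $h$) so that this transfer costs only higher-order terms and introduces no coupling between $\tau$ and $h$; this is precisely where the stabilization $\mu_n$ and the negative-definiteness of $L_{0,h}^n$ must be exploited sharply to keep the estimate unconditional.
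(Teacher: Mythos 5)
Your energy/Gronwall argument does recover the $\ell^{2}$ half of \eqref{eq:lms}: pairing \eqref{Sheq2} with $S_\psi$, using $-(L_{0,h}^nW,W)_{\ell^2}=\|(\frac{\mbox{i}}{\kappa}\nabla+\bA_h^n)\Pi_hW\|_0^2$, Lemma \ref{lm:delta} with Young, the $O(\tau)$ bounds on $\delta_n^2,\delta_n^3$, and the MBP-based Lipschitz control of the nonlinear difference would yield $\|E_\psi^n\|_{\ell^2}^2\le(1+C\tau)\|E_\psi^{n-1}\|_{\ell^2}^2+C\tau(\|e_{\bA}^n\|_0^2+\tau^2+h^2)$. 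But the lemma also claims the \emph{endpoint} dissipation $\tau\|(\frac{\mbox{i}}{\kappa}\nabla+\bA_h^n)e_\psi^n\|_0^2$ on the left, which is indispensable for the telescoping against \eqref{eq:lme} in Theorem \ref{th:err1}, and this is exactly what your argument does not produce --- as you acknowledge. The repair you sketch is genuinely blocked, not just delicate: tracking the growth of $B_n(t)=-(L_{0,h}^nS_\psi,S_\psi)_{\ell^2}$ under the forcing requires pairing the equation with $L_{0,h}^nS_\psi$, and the resulting term $(\delta_n^1,L_{0,h}^nS_\psi)_{\ell^2}$ can only be estimated through Lemma \ref{lm:delta}, whose bound then involves $\|(\frac{\mbox{i}}{\kappa}\nabla+\bA_h^n)\Pi_h(L_{0,h}^nS_\psi)\|_0$, a discrete second-order quantity with nothing on the left to absorb it ($\delta_n^1$ has no useful strong $\ell^2$ bound, only the weak-form one). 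Likewise, any mode-by-mode transfer from $\int_{t_{n-1}}^{t_n}B_n\,dt$ to $\tau B_n(t_n)$ costs factors of $\tau|\lambda|$, with $|\lambda|\sim h^{-2}$ the extreme eigenvalues of $L_{\mu_n,h}^n$, i.e.\ precisely the $\tau$--$h$ coupling the unconditional estimate must avoid.

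The paper sidesteps all of this with one algebraic move that your proposal is missing: write the exact one-step Duhamel representation $E_\psi^n=e^{\tau L_{\mu_n,h}^n}E_\psi^{n-1}+\int_0^\tau e^{(\tau-s)L_{\mu_n,h}^n}(\delta_n^1+\delta_n^2+\delta_n^3+f_{\mu_n}(\Psi_h^{n-1})-f_{\mu_n}(I_h\psi^{n-1}))\,ds$, apply $I-\tau L_{\mu_n,h}^n$ to both sides, and only then test with $E_\psi^n$. The left side becomes $\|E_\psi^n\|_{\ell^2}^2-\tau(L_{\mu_n,h}^nE_\psi^n,E_\psi^n)_{\ell^2}=\|E_\psi^n\|_{\ell^2}^2+\tau\|(\frac{\mbox{i}}{\kappa}\nabla+\bA_h^n)e_\psi^n\|_0^2+\mu_n\tau\|E_\psi^n\|_{\ell^2}^2$, so the endpoint term appears for free, with no integrated-to-endpoint transfer. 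On the right, because $L_{\mu_n,h}^n$ is self-adjoint and negative definite, the data are hit only by the bounded spectral functions $q_1(x)=(1-x)e^x\in(0,1)$ and $q_2(x)=(1-x)(e^x-1)/x\in(1,2)$ for $x<0$; in particular the $\delta_n^1$ contribution equals $\tau(\delta_n^1,q_2(\tau L_{\mu_n,h}^n)E_\psi^n)_{\ell^2}$ after moving the operator onto $E_\psi^n$ by self-adjointness, so Lemma \ref{lm:delta} applies with $W_h=q_2(\tau L_{\mu_n,h}^n)E_\psi^n$ and yields factors $\|(\frac{\mbox{i}}{\kappa}\nabla+\bA_h^n)e_\psi^n\|_0+\|E_\psi^n\|_{\ell^2}$ that Young's inequality absorbs into the left side. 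In short, your route and the paper's agree on all the estimates of the data terms, but the paper's $(I-\tau L_{\mu_n,h}^n)$-times-Duhamel device is the missing idea that makes the gradient term at $t_n$ come out of the algebra rather than out of an (unavailable) regularity argument.
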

\begin{proof}
It follows from \eqref{Sheq2} that
\begin{equation*}
E_\psi^{n}=e^{\tau L_{\mu_n,h}^n}E_\psi^{n-1} + \int_0^\tau e^{(\tau-s)L_{\mu_n,h}^n}(\delta_n^1 + \delta_n^2 + \delta_n^3 + f_{\mu_n}(\Psi_h^{n-1}) - f_{\mu_n} (I_h\psi^{n-1}))\ds.
\end{equation*}
Acting $I-\tau L_{\mu_n,h}^n$ on both sides of the equation above and taking $\ell^2$ inner product with $E_\psi^{n}$ yield
\begin{equation}\label{S0}
\begin{aligned}
&\|E_\psi^{n}\|_{\ell^2}^2 + \tau \|(\frac{i}{\kappa}\nabla + \bA_h^{n})e_\psi^{n}\|_0^2 + \mu \tau\|E_\psi^{n}\|_{\ell^2}^2
\\
=&(q_1(\tau L_{\mu_n,h}^n)E_\psi^{n-1} + \tau q_2(\tau L_{\mu_n,h}^n)(f_{\mu_n}(\Psi_h^{n-1}) - f_{\mu_n} (I_h\psi^{n-1})),E_\psi^{n})_{\ell^2}
\\
&+ \int_0^\tau ((I-\tau L_{\mu_n,h}^n)e^{(\tau-s)L_{\mu_n,h}^n}\delta_n^1,E_\psi^{n})_{\ell^2}\ds
\\
&+ \int_0^\tau ((I-\tau L_{\mu_n,h}^n)e^{(\tau-s)L_{\mu_n,h}^n}(\delta_n^2 + \delta_n^3),E_\psi^{n})_{\ell^2}\ds,
\end{aligned}
\end{equation}
where $q_1(x)=(1-x)e^x$, $q_2(x)={(1-x)(e^x-1)}/{x}$. Note that for any $x<0$,
\begin{align*}
0<q_1(x)<1<q_2(x)<2.
\end{align*}
Since $L_{\mu_n,h}^n$ is negative definite,
\begin{equation}\label{S1}
\begin{aligned}
&|(q_1(\tau L_{\mu_n,h}^n)E_\psi^{n-1},E_\psi^{n})_{\ell^2}|\le  \|E_\psi^{n-1}\|_{\ell^2}\|E_\psi^{n}\|_{\ell^2},
\\
&|\tau (q_2(\tau L_{\mu_n,h}^n)(f_{\mu_n}(U_h^{n-1}) - f_{\mu_n} (I_h\psi^{n-1})),E_\psi^{n})_{\ell^2}|\le  C \tau\|E_\psi^{n-1}\|_{\ell^2}\|E_\psi^{n}\|_{\ell^2}.
\end{aligned}
\end{equation}
It follows from Lemma \ref{lm:delta} that
\begin{equation*}
|((I-\tau L_{\mu_n,h}^n)e^{(\tau-s)L_{\mu_n,h}^n}\delta_n^1,E_\psi^{n})_{\ell^2}|
\lesssim (\|e_{\bA}^{n}\|_0 + h)(\|(\frac{i}{\kappa}\nabla + \bA_h^{n})t_h^n\|_0
+ \|t_h^n\|_0),
\end{equation*}
where $$t_h^n=(I-\tau (L_{\mu_n,h}^n)^T)e^{(\tau-s)(L_{\mu_n,h}^n)^T}E_\psi^n.$$ Since $0<q_1(x)<1$,
\begin{equation}\label{S2}
\begin{split}
&\left|\int_0^\tau ((I-\tau L_{\mu,h}^n)e^{(\tau-s)L_{\mu_n,h}^n}\delta_n^1,E_\psi^{n})_{\ell^2}\ds\right|\\
\lesssim &\tau(\|e_{\bA}^{n}\|_0 + h)(\|(\frac{\mbox{i}}{\kappa}\nabla + \bA_h^{n})e_\psi^n\|_0 + \|E_\psi^n\|_{\ell^2}).
\end{split}
\end{equation}
Note that $\|\delta_n^2\|_0 + \|\delta_n^3\|_0\lesssim \tau$. Thus,
\begin{equation}\label{S3}
\left|\int_0^\tau ((I-\tau L_{\mu_n,h}^n)e^{(\tau-s)L_{\mu_n,h}^n}(\delta_n^2 + \delta_n^3),E_\psi^{n})_{\ell^2}\ds\right|\lesssim \tau^2\|E_\psi^n\|_{\ell^2}.
\end{equation}
A substitution of \eqref{S1}, \eqref{S2} and \eqref{S3} into \eqref{S0} gives
\begin{equation*}
\begin{split}
&\|E_\psi^{n}\|_{\ell^2}^2 + \tau \|(\frac{\mbox{i}}{\kappa}\nabla + \bA_h^{n})e_\psi^{n}\|_0^2 + \frac{\mu_n\tau}{2} \|E_\psi^{n-1}\|_{\ell^2}^2
\\
\le &(1+ C \tau)\|E_\psi^{n-1}\|_{\ell^2}\|E_\psi^{n}\|_{\ell^2}
+ C\tau(\|e_{\bA}^{n}\|_0 + h)\|(\frac{\mbox{i}}{\kappa}\nabla + \bA_h^{n})e_\psi^n\|_0
\\
&+ C\tau\|E_\psi^n\|_0(\|e_{\bA}^{n}\|_{\ell^2} + \|E_\psi^n\|_{\ell^2} + \tau + h).
\end{split}
\end{equation*}
By the Young's inequality,
\begin{equation*}
\|E_\psi^{n}\|_{\ell^2}^2  + \tau \|(\frac{\mbox{i}}{\kappa}\nabla + \bA_h^{n})e_\psi^{n}\|_0^2
\le (1+ C \tau)\|E_\psi^{n-1}\|_{\ell^2}^2 + C\tau(\|e_{\bA}^{n}\|_0^2 + \tau^2 + h^2),
\end{equation*}
which completes the proof.
\end{proof}

The following theorem presents the main result of the error estimate of the proposed numerical scheme \eqref{dis1}-\eqref{psieq}.

\begin{theorem}\label{th:err1}
Assume that  Assumption~\ref{ass:mesh} and \ref{ass:regularity} hold. Let $\bA_h^0=R_{h}\bA^0$ and $\Psi_h^0=I_h\psi^0$ with $\|\psi_0\|_\infty\le 1$. $\{(\bA_h^n, \Psi_h^n)\}_{n=1}^{K_t}$ is generated by the discrete system \eqref{dis1}-\eqref{psieq} with the stabilizing parameter  stabilization parameter $\mu_n$ satisfying \eqref{mucondition} and time step $\tau_n=\tau$. For any $1\le n\le K_t$,
\begin{equation*}
\|\bA_h^n-\bA^n\|_0 + \|\nabla\times (\bA_h^n-\bA^n)\|_0 + \|\psi_h^n-\psi^n\|_0\lesssim \tau + h.
\end{equation*}
\end{theorem}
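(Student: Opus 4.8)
The plan is to fold the three recursive estimates of the preceding lemmas into a single discrete Gronwall argument on a combined error functional, and then recover the curl bound by a separate telescoping. First I would abbreviate $\mathcal{G}^n:=\|(\frac{\mbox{i}}{\kappa}\nabla+\bA_h^n)e_\psi^n\|_0^2$ and introduce
\[
\mathcal{F}^n=\sigma\|e_{\bA}^n\|_0^2+\|E_\psi^n\|_{\ell^2}^2+\tau\,\mathcal{G}^n .
\]
The crucial observation is that the gradient term $\tau\,\mathcal{G}^{n-1}$ on the right of \eqref{eq:lme} is exactly the quantity $\tau\,\mathcal{G}^n$ that is controlled on the left of \eqref{eq:lms} in Lemma~\ref{lm:ETD1err}, shifted by one time index. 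Adding \eqref{eq:lme} to \eqref{eq:lms} therefore makes these terms telescope, and after using $\tau\,\mathcal{G}^{n-1}\le(1+C\tau)\tau\,\mathcal{G}^{n-1}$ and merging the two $\|E_\psi^{n-1}\|_{\ell^2}^2$ contributions into the $(1+C\tau)$ factor, the sum reads
\[
\mathcal{F}^n+2\tau\|\nabla\times e_{\bA}^n\|_0^2\le(1+C\tau)\mathcal{F}^{n-1}+C\tau\|e_{\bA}^n\|_0^2+C\tau(h^2+\tau^2).
\]

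Next I would drop the nonnegative term $2\tau\|\nabla\times e_{\bA}^n\|_0^2$ and handle the residual $C\tau\|e_{\bA}^n\|_0^2$ sitting at the current index on the right. Since $\|e_{\bA}^n\|_0^2\le\mathcal{F}^n/\sigma$ and $\sigma>0$, for $\tau$ sufficiently small (with $\tau_0$ depending only on $\sigma$ and $C$) this term is absorbed into the left, giving $\mathcal{F}^n\le(1+C'\tau)\mathcal{F}^{n-1}+C'\tau(h^2+\tau^2)$. The initializations $\bA_h^0=R_{h}\bA^0$ and $\Psi_h^0=I_h\psi^0$ force $e_{\bA}^0=0$ and $E_\psi^0=0$, hence $\mathcal{F}^0=0$, so the discrete Gronwall inequality together with $n\tau\le T$ yields $\mathcal{F}^n\lesssim h^2+\tau^2$ for all $1\le n\le K_t$. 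In particular $\|e_{\bA}^n\|_0\lesssim h+\tau$ and $\|E_\psi^n\|_{\ell^2}\lesssim h+\tau$. Telescoping \eqref{eq:lms} over the time levels and inserting these pointwise bounds then gives the time-summed gradient estimate $\sum_{k=1}^n\tau\,\mathcal{G}^k\lesssim h^2+\tau^2$, which is indispensable for the curl bound.

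For the pointwise curl estimate I would telescope \eqref{eq:lmecurl} from $k=1$ to $n$, using $\|\nabla\times e_{\bA}^0\|_0=0$, to obtain
\[
\|\nabla\times e_{\bA}^n\|_0^2\le C\sum_{k=1}^n\tau\,\mathcal{G}^{k-1}+C\sum_{k=1}^n\tau\big(\|e_{\bA}^{k-1}\|_0^2+\|e_{\bA}^k\|_0^2+\|E_\psi^{k-1}\|_{\ell^2}^2+h^2+\tau^2\big).
\]
The first sum is exactly the time-summed gradient estimate, so it is $\lesssim h^2+\tau^2$; here it is essential to use the accumulated-in-time control rather than any pointwise bound on $\mathcal{G}^{k-1}$. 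The second sum is $\lesssim \tau\,n\,(h^2+\tau^2)\lesssim T(h^2+\tau^2)$ since each summand is already $O(h^2+\tau^2)$. Hence $\|\nabla\times e_{\bA}^n\|_0\lesssim h+\tau$.

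Finally I would pass from projection errors to true errors by the triangle inequality. Writing $\bA_h^n-\bA^n=e_{\bA}^n+(R_{h}-I)\bA^n$ and invoking the Ritz estimate \eqref{ritz:err} gives $\|\bA_h^n-\bA^n\|_0\lesssim(h+\tau)+h^2$ and $\|\nabla\times(\bA_h^n-\bA^n)\|_0\lesssim(h+\tau)+h$. For the order parameter, $e_\psi^n=\Pi_hE_\psi^n=\psi_h^n-\Pi_L\psi^n$, so \eqref{L2def} yields $\|e_\psi^n\|_0\lesssim\|E_\psi^n\|_{\ell^2}\lesssim h+\tau$, while \eqref{err:linear} controls $\|\psi^n-\Pi_L\psi^n\|_0\lesssim h^2$; adding these gives $\|\psi_h^n-\psi^n\|_0\lesssim h+\tau$, and summing the three bounds completes the proof. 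The main obstacle is precisely the bookkeeping of $\mathcal{G}$: it is available only with a $\tau$-weight from the $\psi$-estimate, yet is needed in both $\bA$-estimates. The resolution is the twofold use above—telescoping $\tau\,\mathcal{G}$ inside the combined functional $\mathcal{F}^n$ for the $L^2$ bound, and using $\sum_k\tau\,\mathcal{G}^k\lesssim h^2+\tau^2$ for the curl bound, since a pointwise estimate would only give $\mathcal{G}^n=O((h^2+\tau^2)/\tau)$ and break the argument.
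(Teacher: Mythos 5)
Your proposal is correct and follows essentially the same route as the paper: the paper defines the combined functional $T^n=\|E_\psi^n\|_{\ell^2}^2+\sigma\|e_{\bA}^n\|_0^2+\tau\|(\frac{\mbox{i}}{\kappa}\nabla+\bA_h^n)e_\psi^n\|_0^2+2\tau\|\nabla\times e_{\bA}^n\|_0^2$, applies the same discrete Gronwall iteration to get the pointwise $L^2$ bounds, then sums \eqref{eq:lms} in time to control $\tau\sum_j\|(\frac{\mbox{i}}{\kappa}\nabla+\bA_h^j)e_\psi^j\|_0^2$, telescopes \eqref{eq:lmecurl} with that accumulated bound, and finishes by the triangle inequality with \eqref{ritz:err} and \eqref{err:linear}. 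Your only deviations are cosmetic and, if anything, more careful: you note $\mathcal{F}^0=0$ (the paper uses $T^0\lesssim h^2$) and you make explicit the absorption of the $C\tau\|e_{\bA}^n\|_0^2$ term for small $\tau$, a step the paper leaves implicit.
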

\begin{proof}
Denote
$$
T^n=\|E_\psi^n \|_{\ell^2}^2
+\sigma \|e_{\bA}^n\|_0^2
+ \tau \|(\frac{\mbox{i}}{\kappa}\nabla + \bA_h^{n})e_\psi^{n}\|_0^2
+ 2\tau \|\nabla \times e_{\bA}^n\|_0^2.
$$
By the estimates \eqref{eq:lme} and \eqref{eq:lms},
\begin{equation*}
T^n \le (1+C\tau)T^{n-1}
+ C\tau (h^{2} + \tau^2),
\end{equation*}
which implies that
\begin{equation*}
T^n \le (1+C\tau)^n T^0 + C\tau (\tau^2 + h^2)\sum_{i=1}^n(1+C\tau)^i.
\end{equation*}
Note that there exists constant~$C_0$ such that
$$
|(1+C\tau)^n| + |\tau\sum_{i=1}^n(1+C\tau)^i|\le C_0.
$$
This, together with the fact that $|T^0|\lesssim h^2$, leads to
\begin{equation}\label{L2error}
\|e_{\bA}^n\|_0 + \|E_{\psi}^n \|_{\ell^2}\lesssim \tau + h.
\end{equation}
As a consequence, the estimate \eqref{eq:lms} reads
\begin{equation*}
\|E_\psi^n\|_{\ell^2}^2 + \tau\|({\mbox{i}\over \kappa}\nabla + \bA_h^{n}) e_{\psi}^{n}\|_0^2 \le (1+C\tau )\|E_\psi^{n-1}\|_{\ell^2}^2 + C\tau(\tau^2 + h^2),
\end{equation*}
which leads to
\begin{equation*}
\begin{split}
&\|E_\psi^n\|_{\ell^2}^2 + \tau\sum_{j=1}^n\|({\mbox{i}\over \kappa}\nabla + \bA_h^{j}) e_{\psi}^{j}\|_0^2\\
\le &(1+C\tau )^n\|E_\psi^{0}\|_{\ell^2}^2 + C\tau(\tau^2 + h^2)\sum_{j=0}^{n-1}(1+C\tau)^j
\le C(\tau^2 + h^2).
\end{split}
\end{equation*}
Substituting this into the estimate \eqref{eq:lmecurl} yields
\begin{equation*}
\begin{split}
\|\nabla \times e_{\bA}^n\|_0^2
\le&  \|\nabla \times e_{\bA}^{0}\|_0^2
+ C\tau\sum_{j=0}^{n-1}\|({\mbox{i}\over \kappa}\nabla + \bA_h^{j}) e_{\psi}^{j}\|_0^2
+ C(\tau^2 + h^2)
\le C(\tau^2 + h^2).
\end{split}
\end{equation*}
A combination of the estimate above and \eqref{L2error} gives
\begin{equation*}
\|e_{\bA}^n\|_0 + \|E_{\psi}^n \|_{\ell^2} + \|\nabla \times e_{\bA}^n\|_0\lesssim \tau + h.
\end{equation*}
This, together with the estimate \eqref{err:linear}, completes the proof.
\end{proof}

\begin{remark}\label{remark:ppr}
In the decoupled numerical scheme \eqref{dis1}-\eqref{psieq}, the first order convergence rate of  $\|\bA_h^n-\bA^n\|_0$ is one degree lower than that of the projection error $\|R_h\bA^n-\bA^n\|_0$ provided that $\bA\in H^2(\Omega,\mathbb{R}^d)$. The gap is caused by nonlinearity, that is the explicit gradient term $g(\psi_h^{n-1})$ in \eqref{dis1}. We can fix the gap by applying the gradient recovery technique in  \cite{zhang2005new} and replace  $g(\psi_h^{n-1})$ in \eqref{dis1} by the recovered gradient, that is to seek $(\hat{\bA_h^n}, \hat \psi_h^n)$ such that
\begin{equation*}
(d_t^n \hat{\bA_h},\bB_h) + D(\hat \psi^{n-1}_h; \hat{\bA_h^n}, \bB_h)=(\bH^n,\nabla\times \bB_h) - (g_M(\hat \psi_h^{n-1},\hat \psi_h^{n-1}), \bB_h),
\end{equation*}
for any $\bB_h\in Q_h$ and $\hat \Psi_h^n=\hat U_h(t_n)$ satisfying \eqref{psieq} with $\hat \Psi_h^0=I_h\psi^0$ and $\hat{ \bA_h^0}=R_{h}\bA^0$, where
$g_M(\psi_h,\psi_h)=\frac{\mbox{i}}{2\kappa} (\psi_h^*(K_h \psi_h) - \psi_h(K_h \psi_h^*))$
with recovered gradient $K_h\psi_h$.
\end{remark}

\begin{remark}
Note that the convergence analysis in Theorem \ref{th:err1} relies on the interpolation error of the solutions, thus the first order convergence rate does not hold theoretically for the numerical scheme when the domain is not convex. Nevertheless, the discrete MBP in Theorem \ref{th:MBP1} and the energy dissipation property in Theorem \ref{th:energy} still hold for non-convex superconductors.
\end{remark}

\section{Numerical Examples}\label{sec:numerical}

In this section, we present some numerical examples to verify the theoretical results and show the vortex motions of superconductors in an external magnetic field.

\subsection{Example 1: convergence test}
Consider the artificial example on $\Omega=(0,1)^2$ with $\kappa=1$
\begin{equation}\label{artificial}
\left\{
\begin{aligned}
\partial_{t} \psi&= -\left(\frac{\mbox{i}}{\kappa} \nabla + \boldsymbol{A}\right)^{2} \psi+\psi-|\psi|^{2} \psi + g& \text { in } \Omega,
\\
\partial_{t} \boldsymbol{A}&=\frac{1}{2 \mbox{i} \kappa }(\psi^* \nabla \psi-\psi \nabla \psi^*)-|\psi|^{2} \boldsymbol{A}-\nabla \times \nabla \times \boldsymbol{A} + f& \text { in } \Omega,
\end{aligned}
\right.
\end{equation}
and boundary and initial conditions \eqref{model0bc}. The functions $f$, $g$, $\psi^0$ and $\boldsymbol{A}^0$ are chosen corresponding to the exact solution
$
\psi = e^{-t}(\cos(2\pi x) + \mbox{i}\cos (\pi y))
$, $\boldsymbol{A}=  [e^tx^{1.001}(1-x)^{5/4}y,\ e^ty^{1.001}(1-y)^{1.001}x]^T  
$
with
$
\bH=\nabla\times \bA
$
We set the terminal time $T=1$ {  and the stabilization parameter $\mu_n=2$} in this example.
Table~\ref{tab:square1} records the $L^2$-norm errors of $\bA_h$, $\nabla\times \bA_h$, $\psi_h$ and $\nabla\psi_h$ on uniform triangulations with spatial mesh size $h$, which coincide with the convergence result in Theorem \ref{th:err1} and show the accuracy of the proposed numerical scheme when the solution is smooth enough.

\begin{table}[!htp]\scriptsize
	\centering
	\begin{tabular}{c|cc|cc|cc|cc}
		\hline
		$1/h$& $\|\bA-\bA_h\|_0$& rate&  $\|\nabla\times(\bA-\bA_h)\|_0$& rate& $\|\psi-\psi_h\|_0$&rate&  $\|\nabla(\psi-\psi_h)\|_0$&rate\\\hline
4& 1.81E+00 &       & 9.75E-01 &       & 8.18E-01 & & 1.28E+00 &  \\\hline
8& 1.31E+00 &0.46& 4.58E-01 & 1.09& 3.36E-01 & 1.28& 4.91E-01 & 1.38 \\\hline
16& 6.32E-01 &1.05& 2.29E-01 & 1.00& 2.23E-01 & 0.59& 2.21E-01 & 1.15\\\hline
32& 3.01E-01 &1.07& 1.14E-01 & 1.00& 1.26E-01 & 0.83& 1.07E-01 & 1.04 \\\hline
64& 1.48E-01 &1.02& 5.70E-02 & 1.00& 6.60E-02 & 0.93& 5.35E-02 & 1.01\\\hline
128& 7.39E-02 &1.00& 2.85E-02 & 1.00& 3.38E-02 & 0.97& 2.67E-02 & 1.00\\\hline
256& 3.70E-02 &1.00& 1.43E-02 & 1.00& 1.71E-02 & 0.98&1.34E-02& 1.00\\\hline
    \end{tabular}%
\caption{ Errors and convergence rates with time step $\tau=10^{-5}$.}
\label{tab:square1}
\end{table}%

\subsection{Example 2: L-shaped superconductor}
We use the proposed formulation to simulate the vortex dynamics in the superconductor $\Omega=(-0.5,0.5)^2\backslash[0, 0.5]\times[-0.5,0]$ with the Ginzburg--Landau parameter $\kappa=10$. The initial conditions and applied magnetic field are
$\psi^0 = 0.6+0.8{\rm i}$, $\boldsymbol{A}^0=(0,0)$ and $\bH=5.$
This example was tested before by different methods, see \cite{gao2017efficient,li2015new} for reference. We simulate the problem on a uniform triangulation with $M=16$ nodes per unit length on each side with stabilization parameter $\mu_n=2$ and time step $\tau=1/16$.  Fig. \ref{fig:Lenergy} plots the discrete energy of the proposed scheme and the maximum norm of the discrete order parameter, which verifies the theoretical results in Theorems~\ref{th:energy} and \ref{th:MBP1}.

\begin{figure}[!ht]
	\centering
\includegraphics[width = 1.9in,height=1.5in]{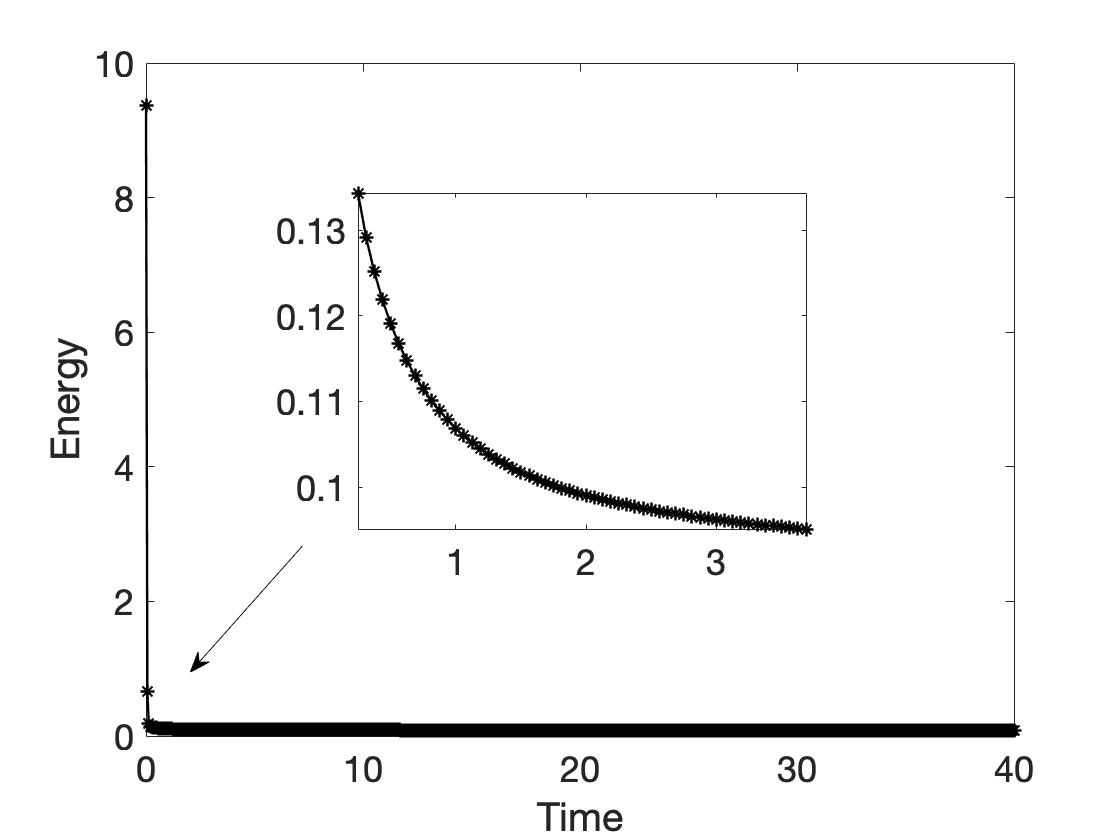} \qquad
\includegraphics[width = 1.9in,height=1.5in]{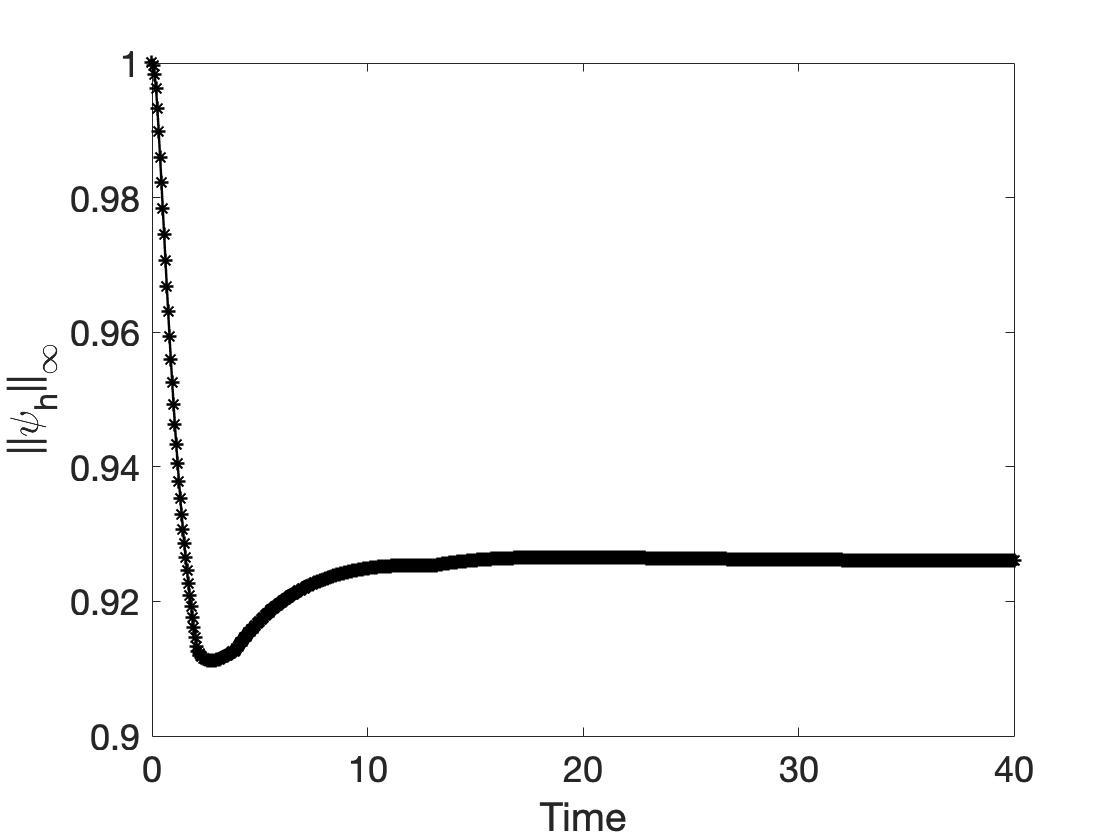}
\caption{Discrete energy and maximum bound of the discrete order parameter for Example 2.}
\label{fig:Lenergy}
\end{figure}

\begin{figure}[!ht]
	\subfloat{
		\centering
		\includegraphics[width = 1.2in]{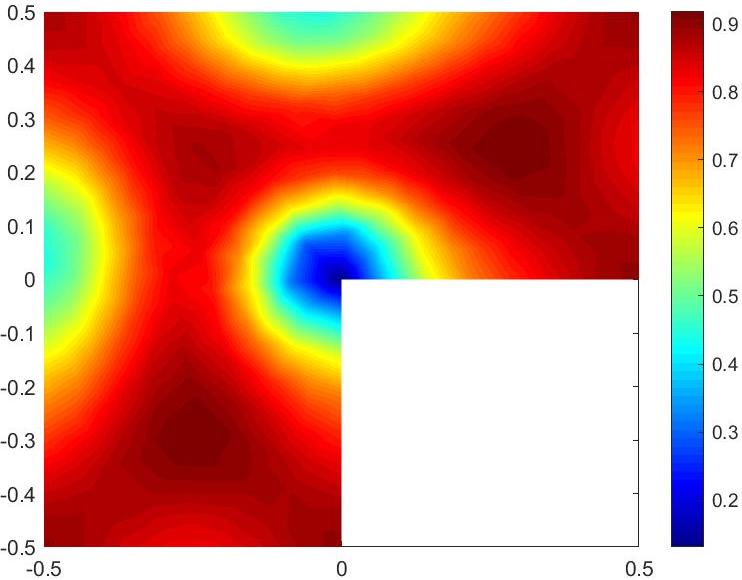}
		\includegraphics[width = 1.2in]{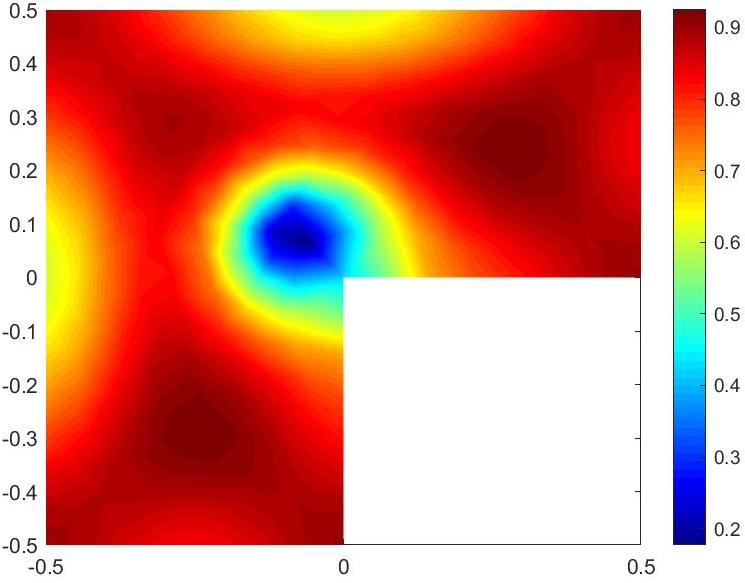}
		\includegraphics[width = 1.2in]{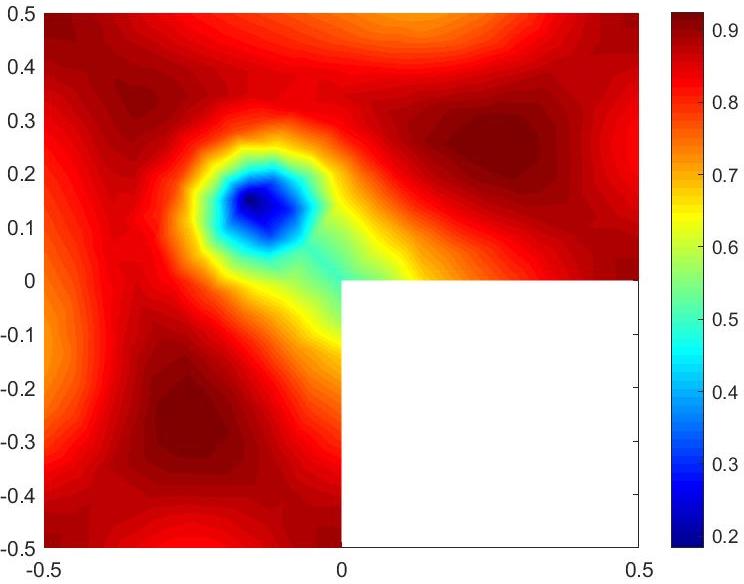}
		\includegraphics[width = 1.2in]{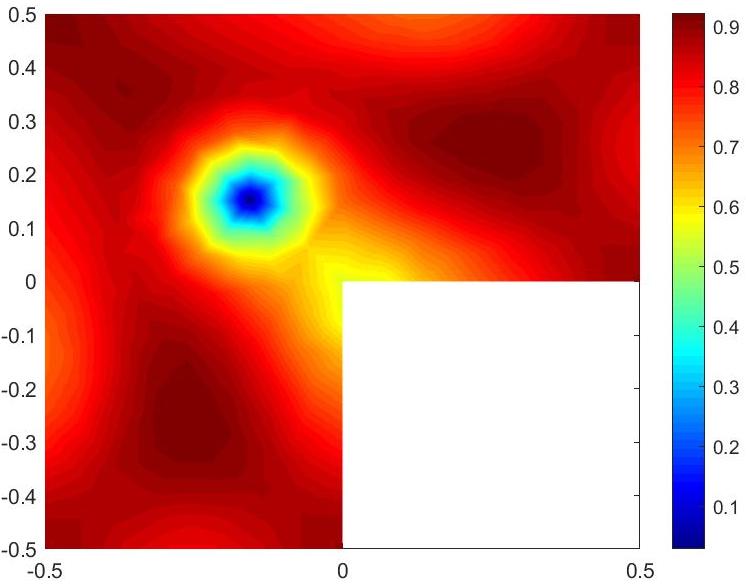}
	}

\subfloat{
	\centering
	\includegraphics[width = 1.2in]{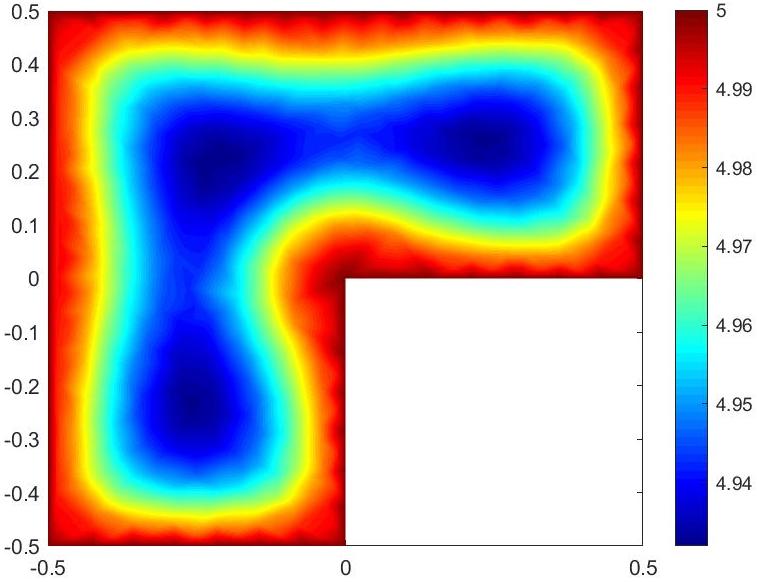}
	\includegraphics[width = 1.2in]{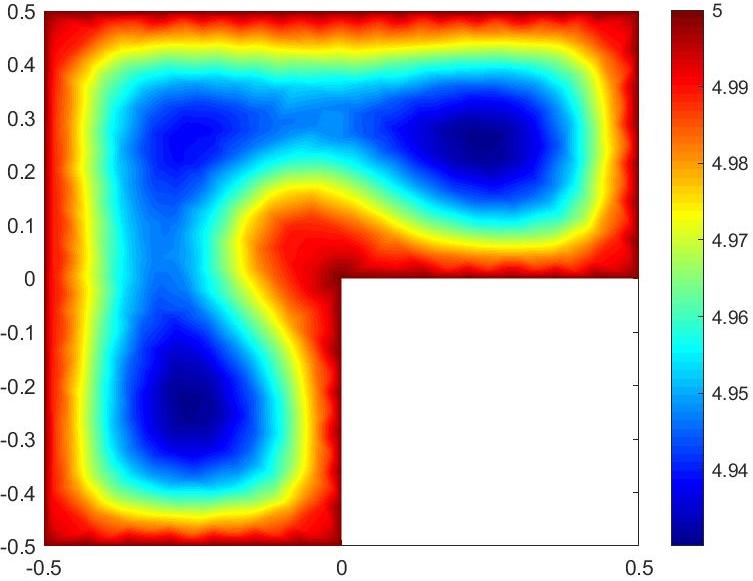}
	\includegraphics[width = 1.2in]{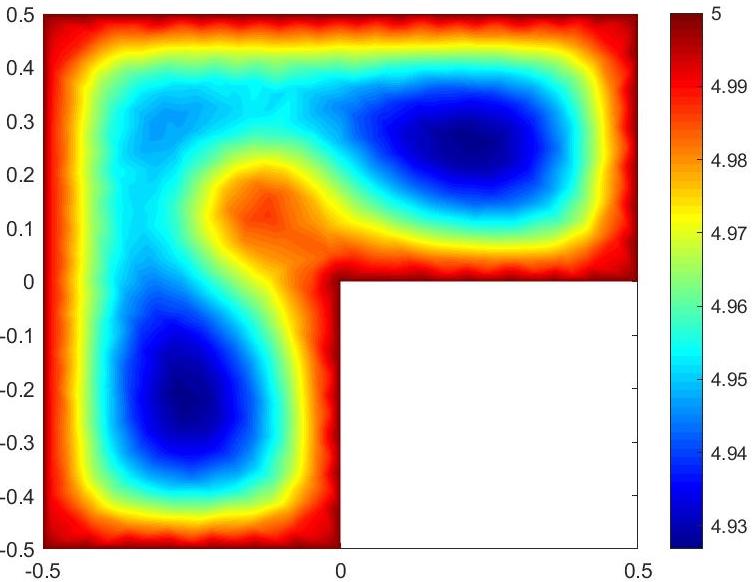}
	\includegraphics[width = 1.2in]{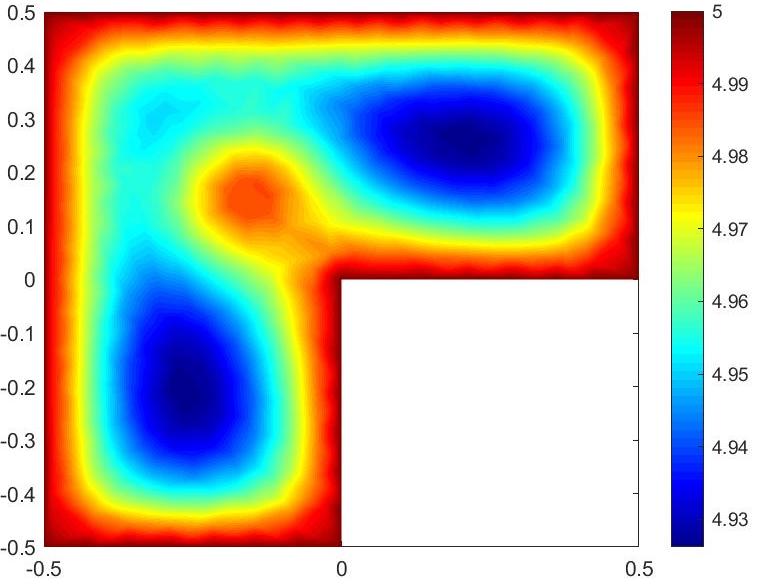}
}\\
\centering $t=5$\hspace{2cm} $t=10$\hspace{2cm}  $t=20$ \hspace{2cm}  $t=40$
\caption{$|\psi_h|$ (above) and $\nabla \times \bA_h$ (below) at $t=5$, $10$, $20$ and $40$  for Example 2.}
\label{fig:Lshape}
	\end{figure}

Fig. \ref{fig:Lshape} plots $|\psi_h|$ and $\nabla\times \bA_h$ at different times by the scheme \eqref{dis1}--\eqref{psieq}. It shows that one vortex enters the material from the reentrant corner as the time increases, which is similar to those reported in \cite{gao2017efficient,li2015new}. Physically speaking, the superconducting density should be between 0 and 1, and the average magnetic field should be less than~$\bH$ when the superconductor is in a mixed state \cite{du1992analysis}.
The numerical results in Fig. \ref{fig:Lenergy} and Fig. \ref{fig:Lshape} coincide with this physical observation. 

Comparing with the numerical schemes in \cite{gao2017efficient,gao2016new,gao2018analysis,hong2022efficient,li2020a,li2015new,li2017mathematical} where this example was tested, there are four virtues of the proposed scheme. Firstly, it is easy for the proposed scheme to implement the boundary condition, where the conventional finite element method and the second order scheme in \cite{gao2014optimal} need to deal with the extra boundary condition. Secondly, the physical boundary condition for the proposed scheme avoids the appearance of the nonphysical numerical phenomena, where the aforementioned schemes generate incorrect solutions when $M=16$ and $32$ as reported in \cite{gao2017efficient,li2015new}. Thirdly, the proposed scheme solves a decoupled linear system of two variables without introducing any auxiliary variables as in the mixed element schemes in \cite{gao2017efficient,gao2016new,li2020a,li2015new}, and the  computational cost of the linear system is smaller compared to the nonlinear systems of the numerical schemes  in \cite{hong2022efficient,li2017convergence}. Moreover, the unconditionally energy stability is guaranteed for the proposed scheme, which allows relatively larger time steps and therefore the application of adaptive time stepping strategies to speed up  simulations.

\subsection{Example 3: hollow superconductor}
We present simulations of vortex dynamics of a type-II superconductor in a square domain $[0,10]^2$ with four square holes $\{(x,y): x\ \mbox{and}\ y\in [2,3]\cup [7,8]\}$. We set
$\sigma=1$, $\kappa =4$, $\psi^0=1.0$, $\boldsymbol{A}^0=(0,0),$
and test on two different external magnetic fields $\bH=1.1$ and $1.9$ { with $\mu_n=2$}.  The example was tested before in \cite{gao2016new,hong2022efficient,peng2014vortex}. We simulate the motion on triangulations generated by Gmsh \cite{geuzaine2009gmsh}. Since the discrete energy decays as proved in Theorem \ref{th:energy}, we adopt the adaptive time-stepping strategy in \cite{qiao2011adaptive} which takes the form
\begin{equation}\label{adptiveT}
\tau^n=\max\{\tau_{\rm min}, \frac{\tau_{\rm max}}{\sqrt{1+ \alpha | \frac{G_h^{n-1} - G_h^{n-2}}{\tau^{n-1}}|^2}}\},
\end{equation}
where the positive constant $\alpha=10^5$, $\tau_{\rm max}=0.2$ and $\tau_{\rm min}=0.02$.

\begin{figure}[!ht]
	\centering
\includegraphics[width = 1.7in]{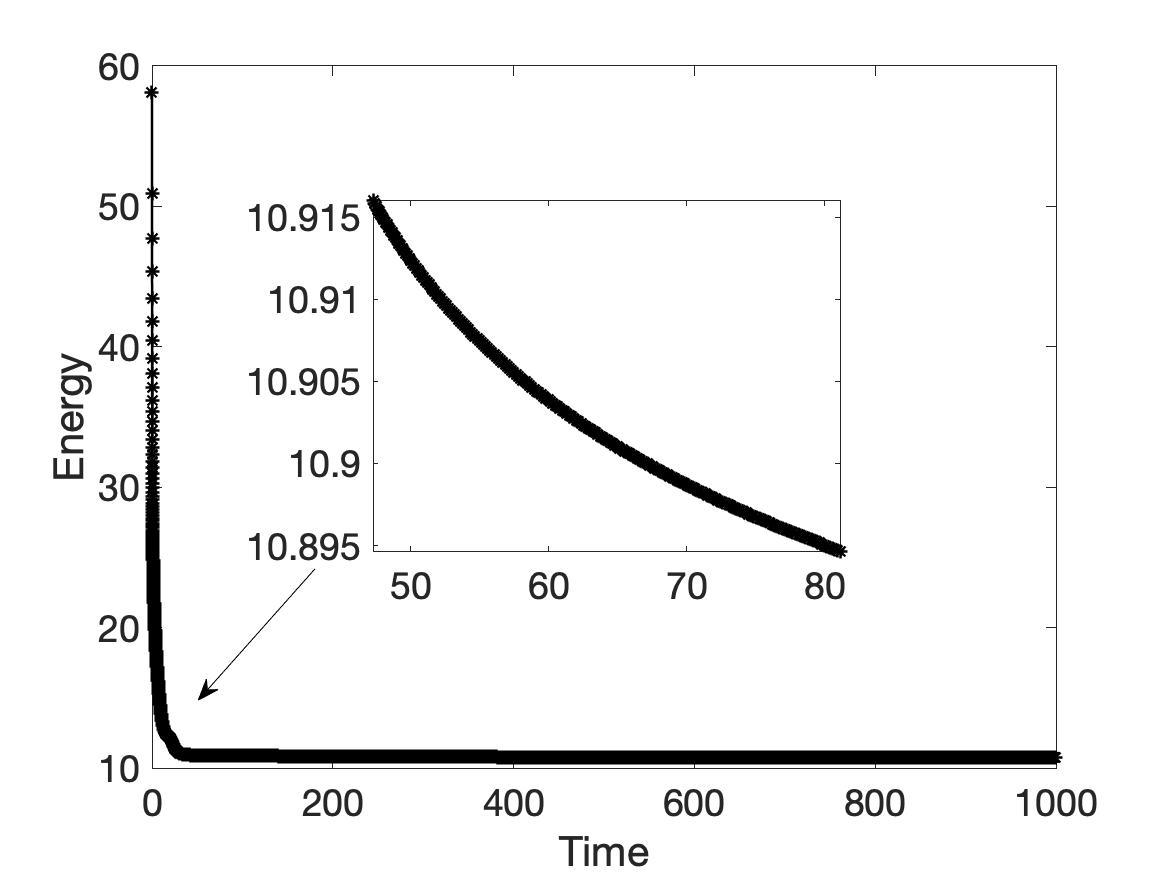} \qquad
\includegraphics[width = 1.7in]{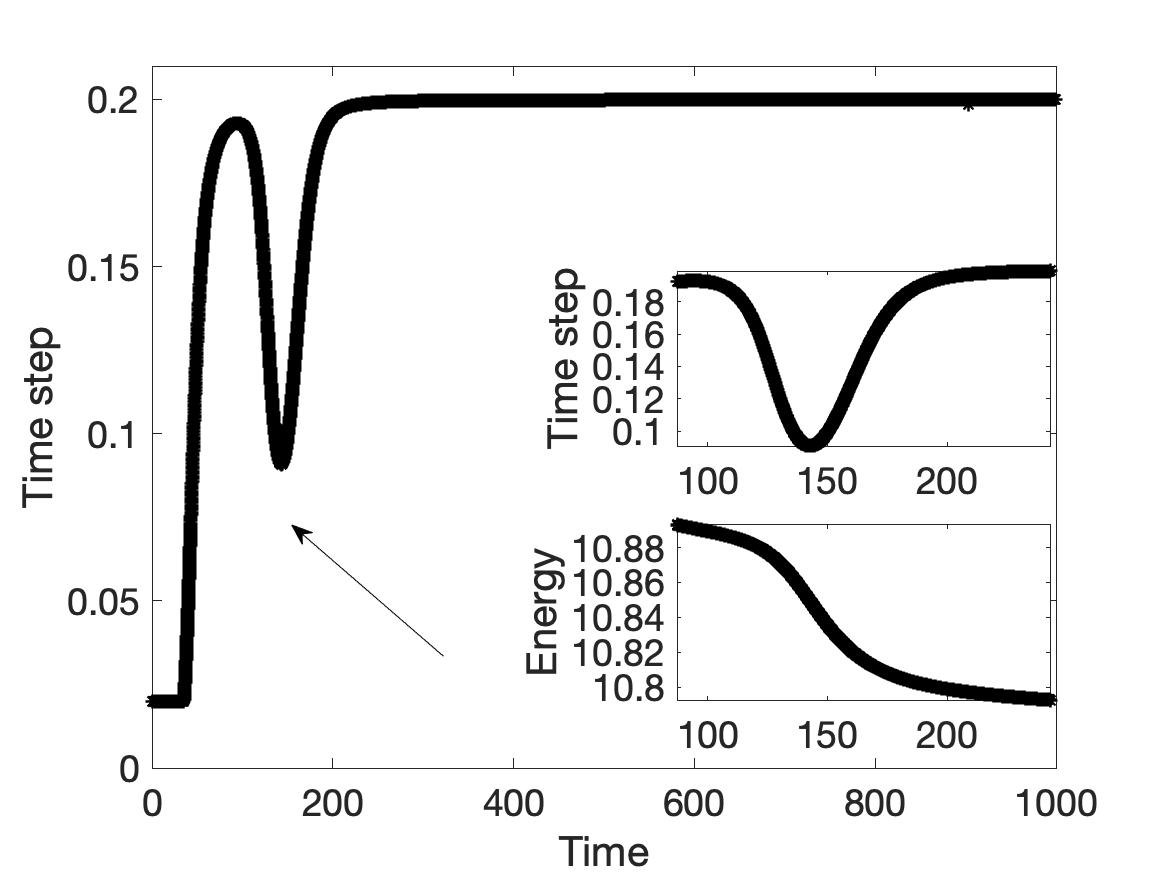}
\caption{Discrete energy and time steps for Example 3 with $\bH=1.1$.}
\label{fig:Holeenergy11}
\end{figure}

\begin{figure}[!ht]
	\centering
\includegraphics[width = 1.8in]{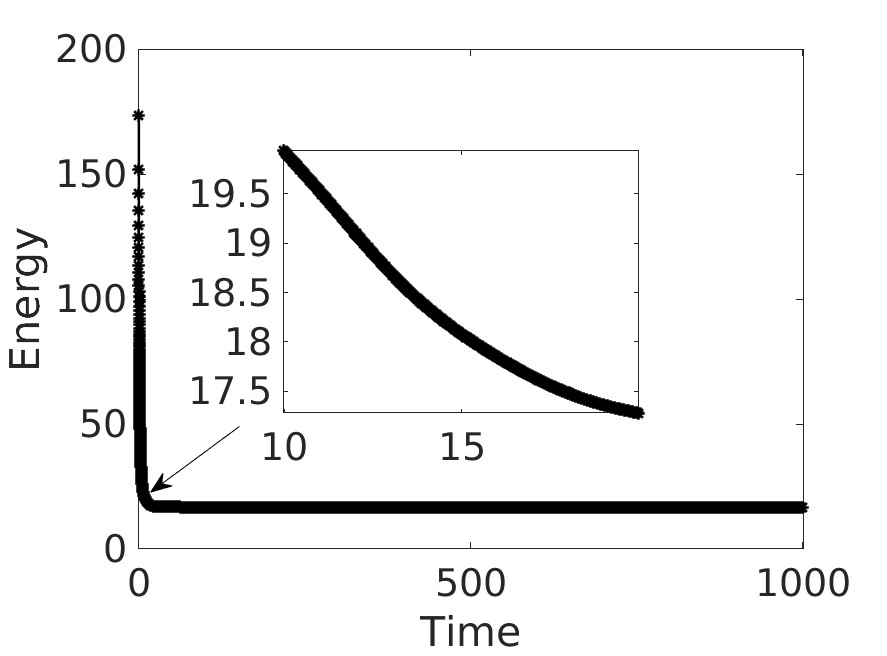} \qquad
\includegraphics[width = 1.8in]{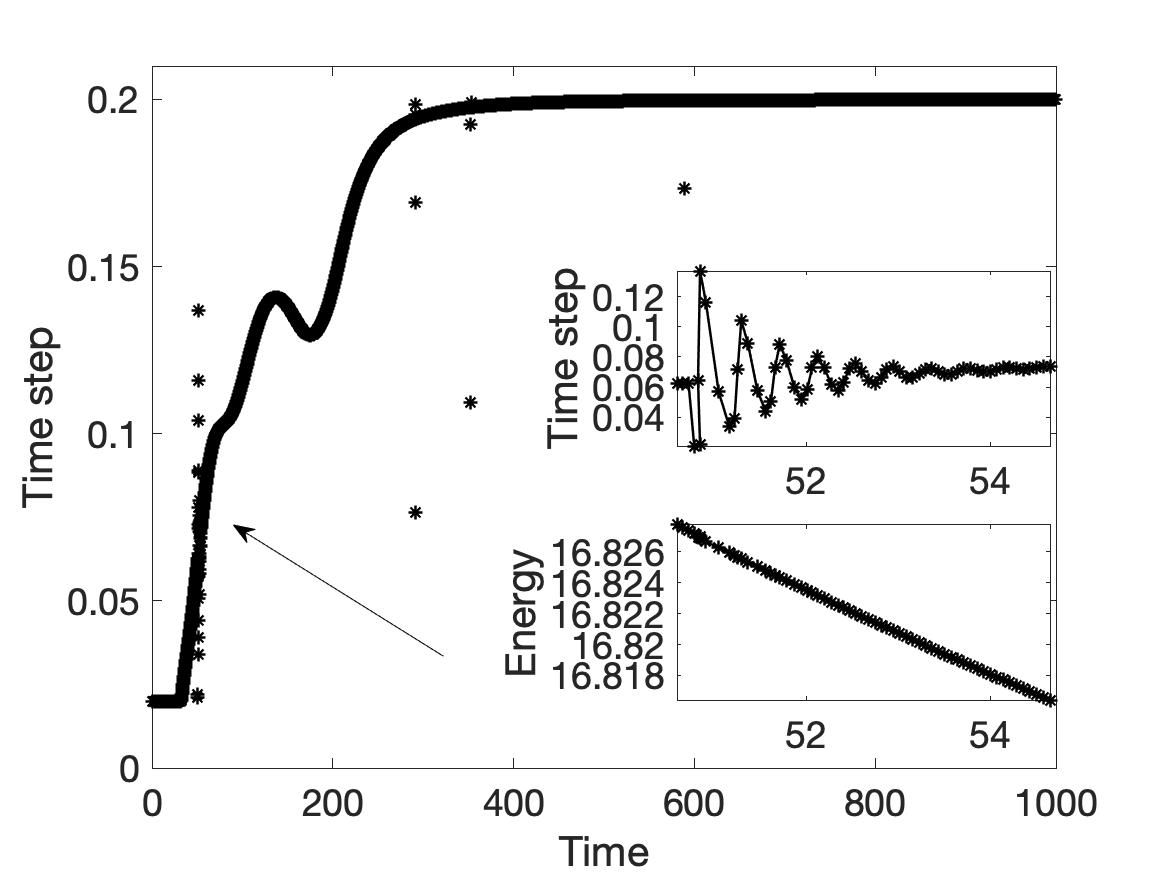}
\caption{Discrete energy and time steps for Example 3 with $\bH=1.9$.}
\label{fig:Holeenergy19}
\end{figure}

\begin{figure}[!ht]
	\centering
\includegraphics[width = 1.8in]{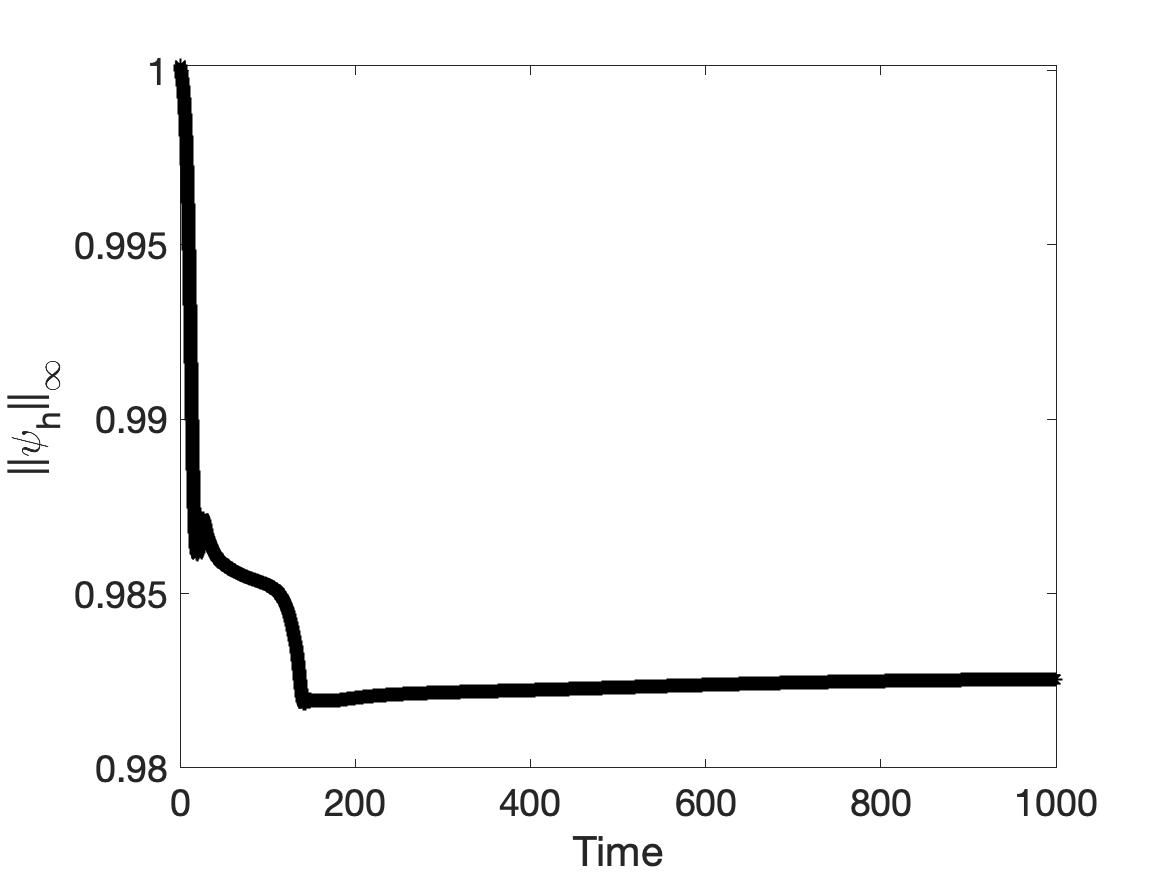} \qquad
\includegraphics[width = 1.8in]{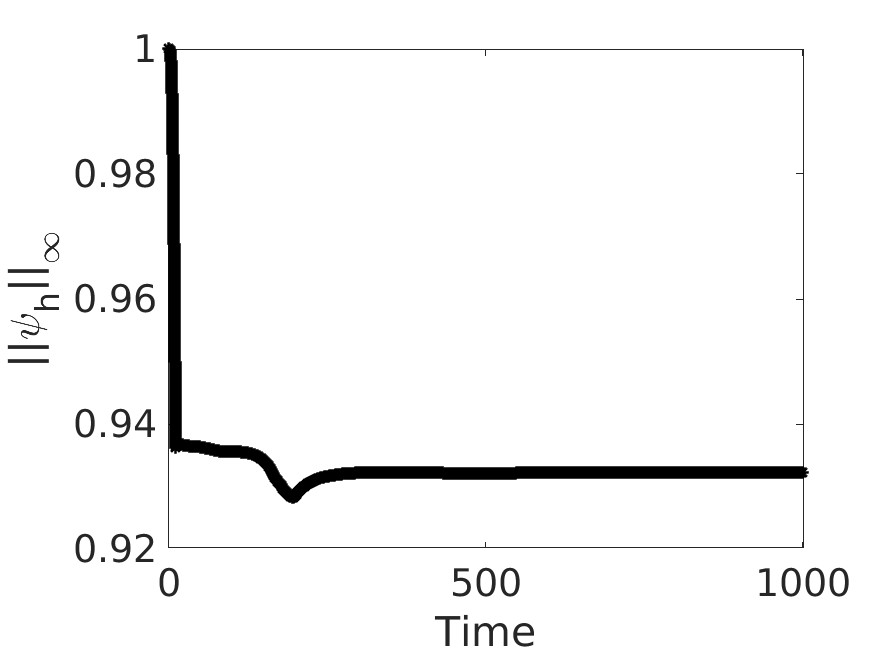}
\caption{Discrete maximum bound of $|\psi_h|$ for Example 3 with $\bH=1.1$ (left) and $\bH=1.9$ (right).}
\label{fig:MBP}
\end{figure}

Fig. \ref{fig:Holeenergy11} and Fig. \ref{fig:Holeenergy19} plot the discrete energy and time steps of the proposed scheme with $\bH=1.1$ and $\bH=1.9$ when $t\le 1000$, respectively. As shown in Fig. \ref{fig:Holeenergy11}, the adaptive time-stepping strategy can successfully capture the change of discrete energy and save computational time.
Note that the time steps are nearly $\tau_{\rm max}=0.2$ when $t\ge 100$ for $\bH=1.1$, which is much larger than $\tau=0.005$  and $\tau=0.02$ in \cite{gao2016new} and \cite{hong2022efficient}, respectively. When the applied magnetic field $\bH=1.9$, the new approach gives a physical simulation of the vortex motion until $t=1000$  with the time step nearly $\tau_{\rm max}=0.2$ when $t\ge 400$ as shown in Fig. \ref{fig:Holeenergy19}.
The vortex motion under $\bH=1.9$ was simulated for $t\le 150$ in \cite{gao2016new} with time step $\tau=0.002$ on a triangulation with $405416$ elements. A nonphysical phenomenon starts to appear in the simulation when $t=10$. We use the proposed scheme \eqref{dis1}-\eqref{psieq} with the adaptive time-stepping strategy \eqref{adptiveT} on a triangulation with $516526$ elements and the simulation exhibits physical phenomenon before $t=800$ and nonphysical behavior starts to appear after $t=800$. As shown in Fig. \ref{fig:Holeenergy19}, Fig. \ref{fig:MBP} and Fig. \ref{fig:Hole19}, our approach on a triangulation with $786482$ elements gives a physical simulation of the vortex motion under $\bH=1.9$ until $t=1000$  with the time step nearly $\tau_{\rm max}=0.2$ when $t\ge 400$. This implies that the proposed scheme \eqref{dis1}-\eqref{psieq} with adaptive time-stepping strategy is much more stable and efficient in long-time simulations. As shown in Fig. \ref{fig:Holeenergy19}, the discrete energy decays even when the time step is not changing continuously which also verifies the unconditional energy decay property of the proposed numerical scheme.

\begin{figure}[!ht]
	\subfloat{
		\centering
	\includegraphics[width = 1.1in]{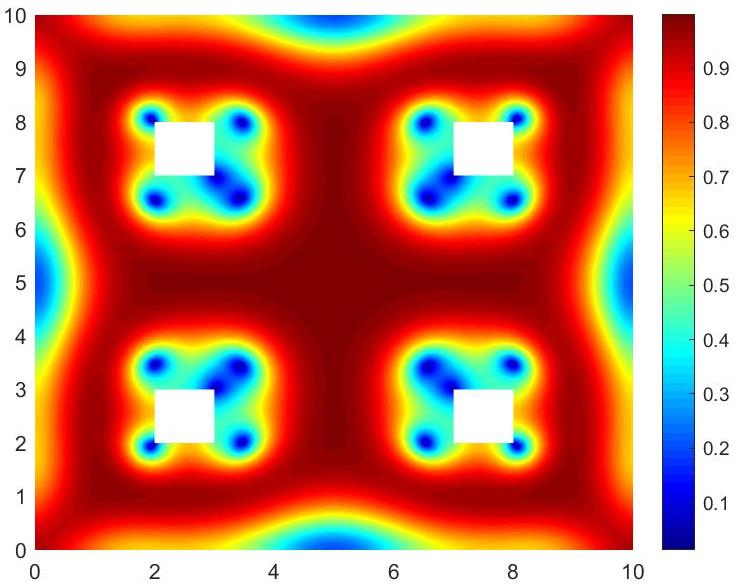}
	\includegraphics[width = 1.1in]{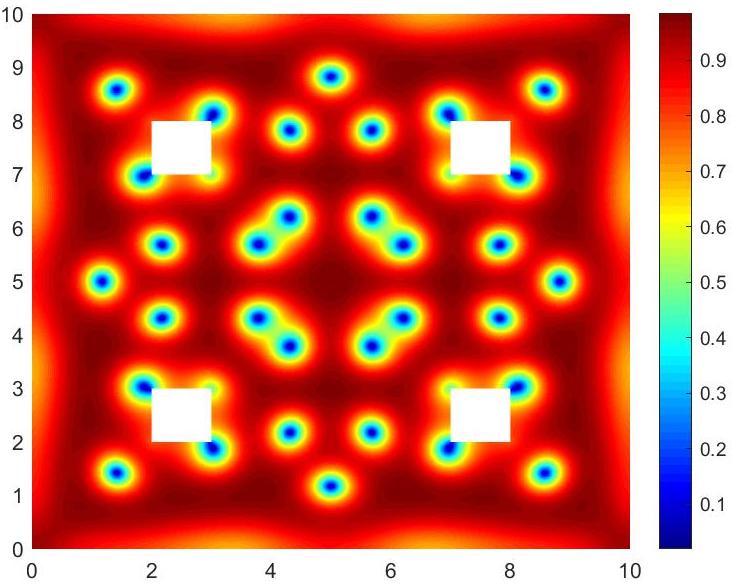}
	\includegraphics[width = 1.1in]{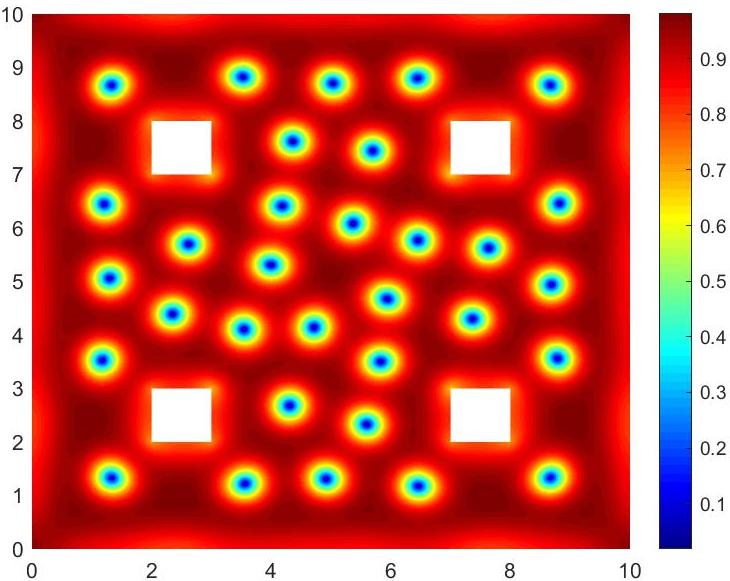}
	\includegraphics[width = 1.1in]{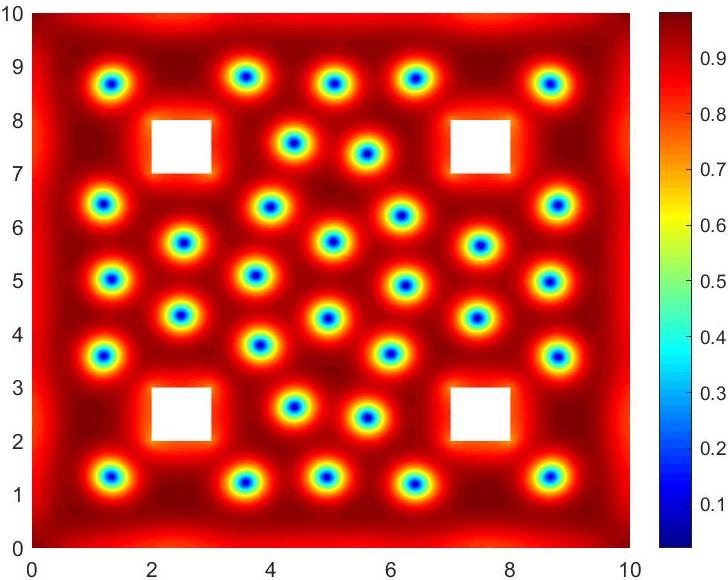}
	}\\
	\subfloat{
		\centering
	\includegraphics[width = 1.1in]{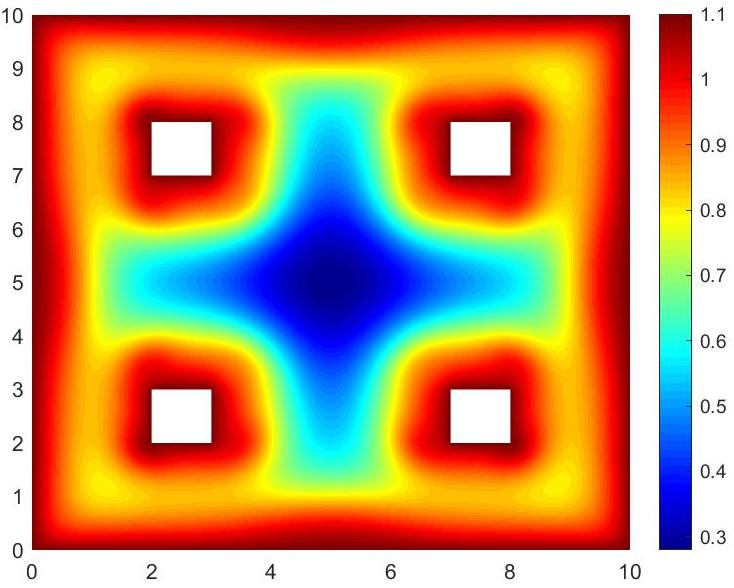}
	\includegraphics[width = 1.1in]{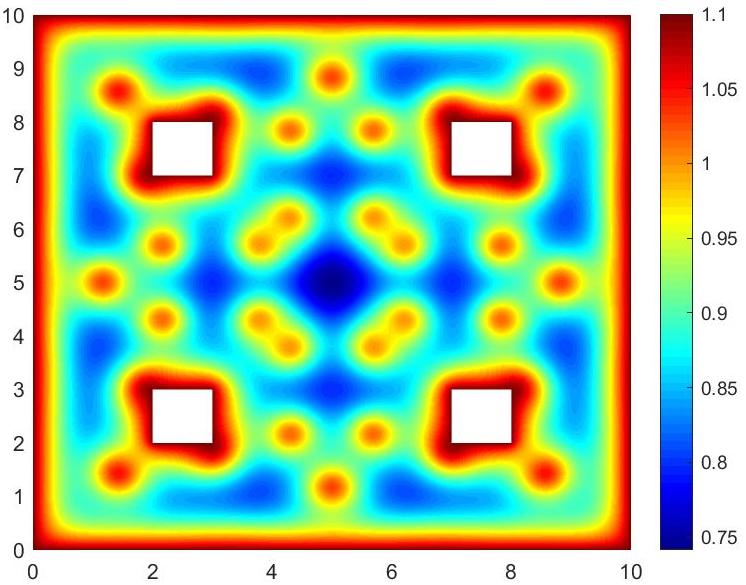}
	\includegraphics[width = 1.1in]{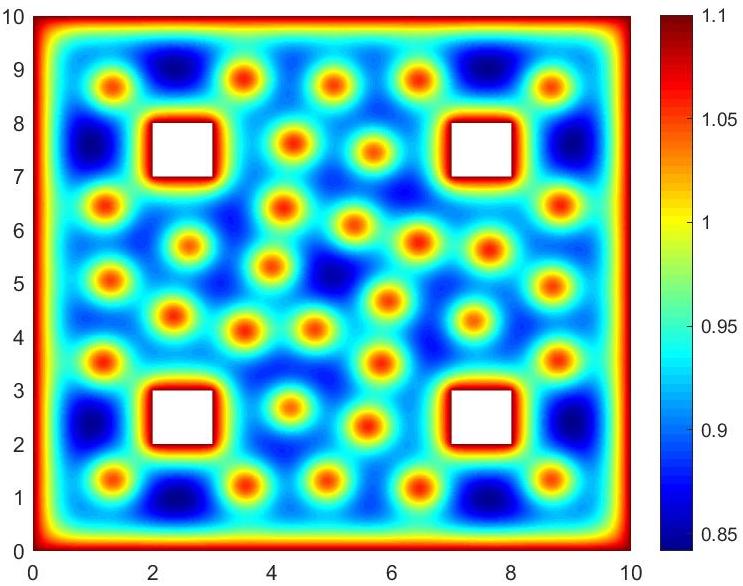}
	\includegraphics[width = 1.1in]{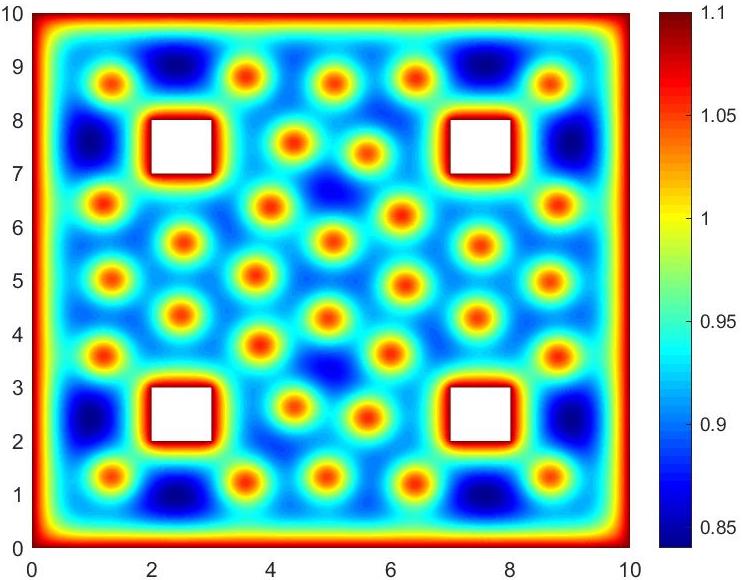}
	}
\\
\centering \hspace{-0.2cm}$t=10$\hspace{1.7cm} $t=50$\hspace{1.7cm}  $t=200$ \hspace{1.6cm}  $t=1000$
\caption{$|\psi_h|$ (above)  and $\nabla\times \bA_h$ (below) at $t=10$, 50, 200, 1000 for Example 3 with $\bH=1.1$ on a triangulation with 516526 elements.}
\label{fig:Hole11}
	\end{figure}

\begin{figure}[!ht]
	\subfloat{
		\centering
	\includegraphics[width = 1.1in]{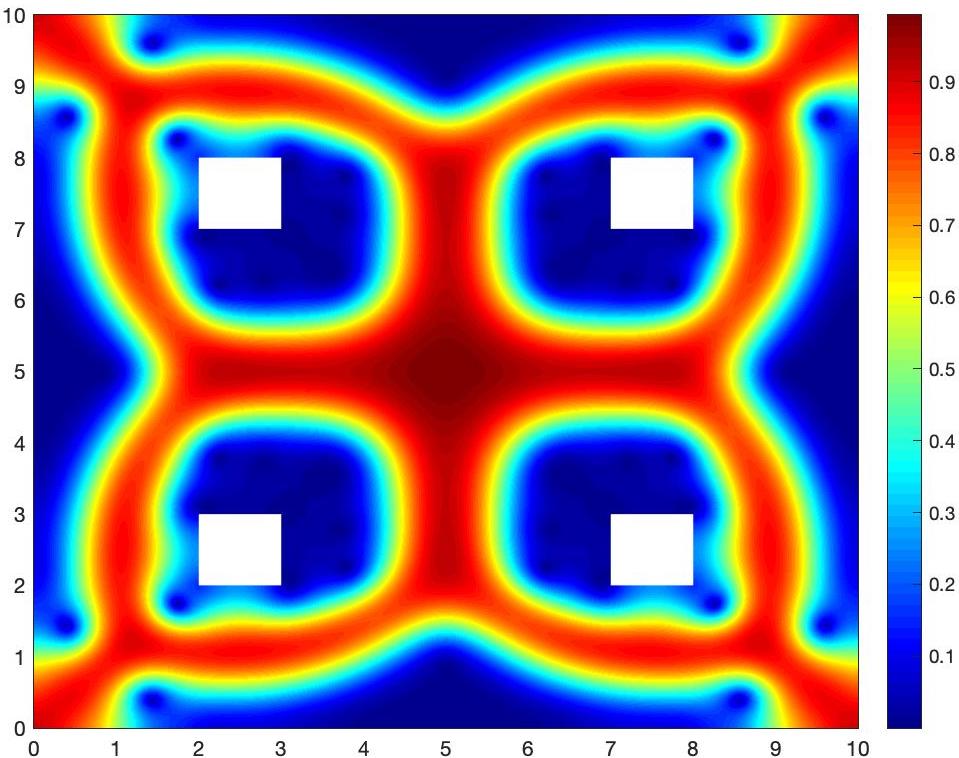}
	\includegraphics[width = 1.1in]{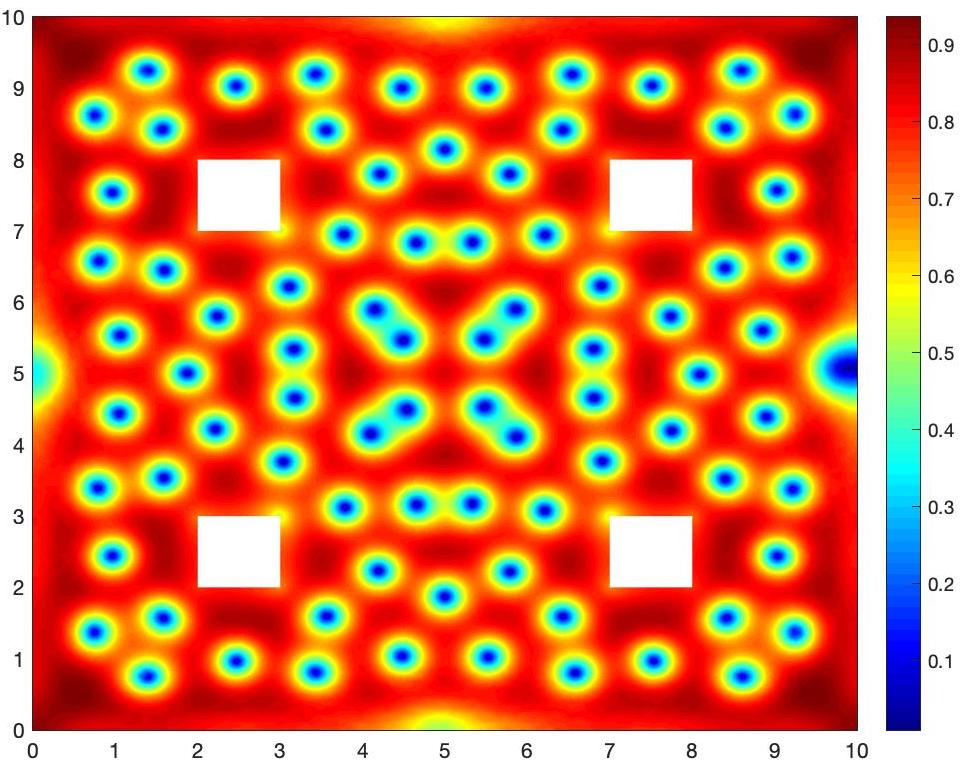}
	\includegraphics[width = 1.1in]{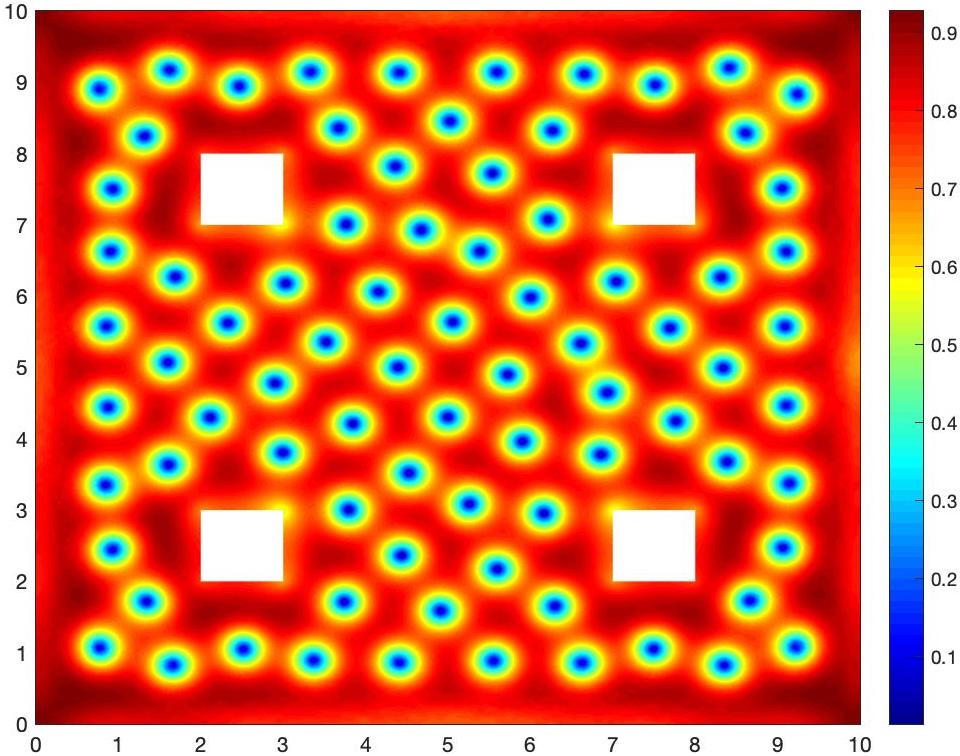}
	\includegraphics[width = 1.1in]{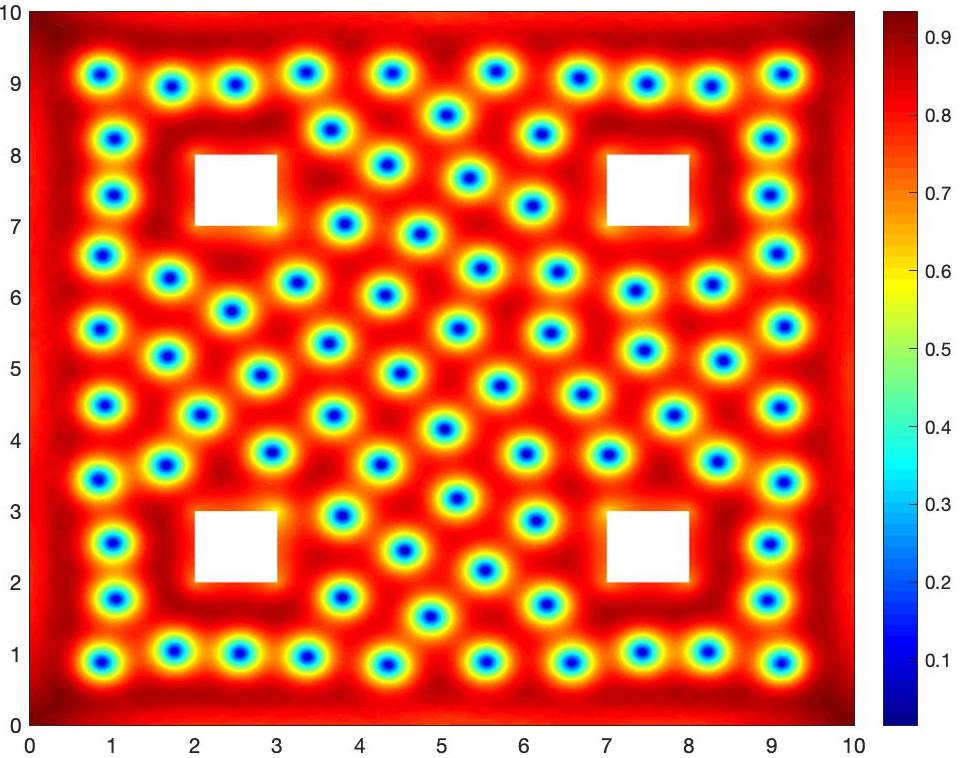}
	}
	\\
	\subfloat{
		\centering
	\includegraphics[width = 1.1in]{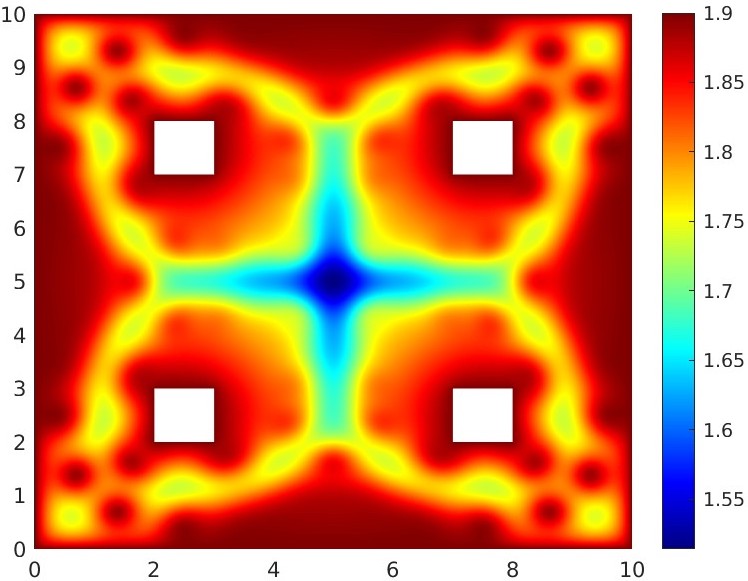}
	\includegraphics[width = 1.1in]{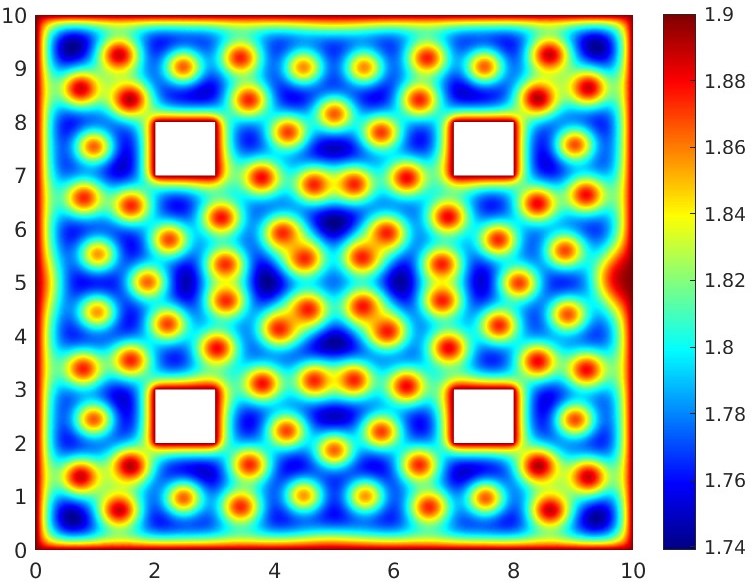}
	\includegraphics[width = 1.1in]{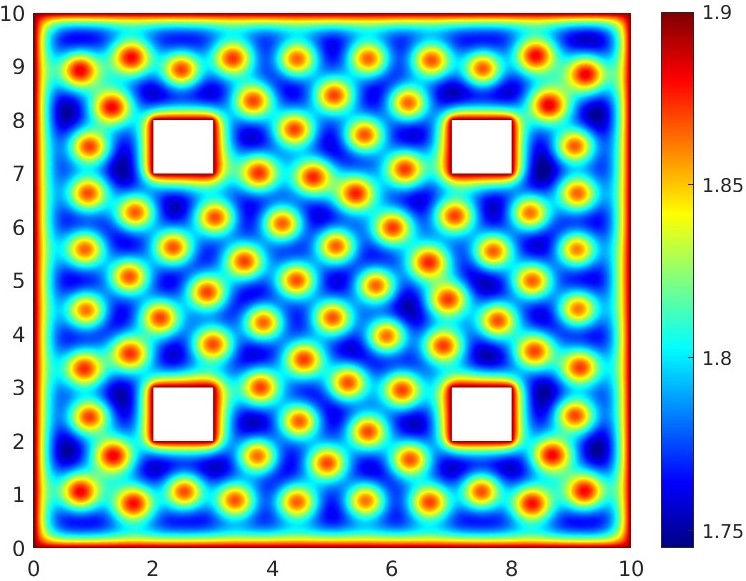}
	\includegraphics[width = 1.1in]{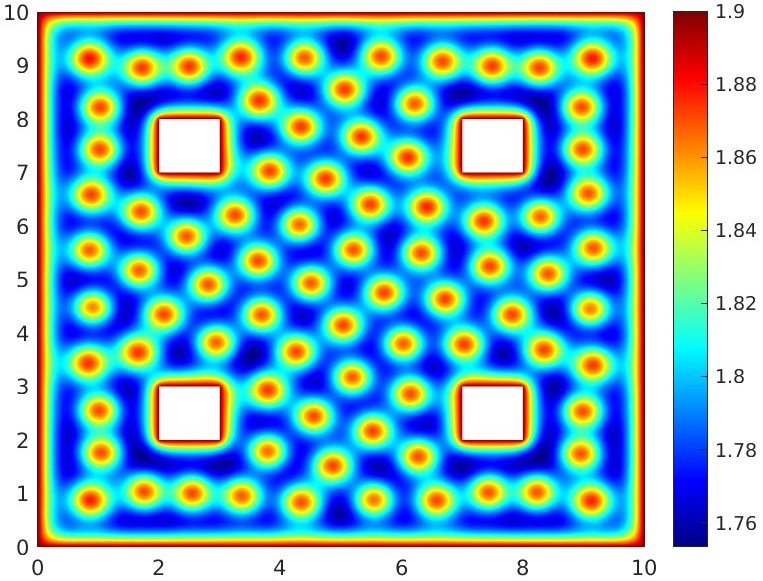}
	}
\\
\centering \hspace{-0.2cm}$t=10$\hspace{1.7cm} $t=50$\hspace{1.7cm}  $t=200$ \hspace{1.6cm}  $t=1000$
\caption{$|\psi_h|$ (above)  and $\nabla\times \bA_h$ (below) at $t=10$, 50, 200, 1000 for Example 3 with $\bH=1.9$ on a triangulation with 786482 elements.}
\label{fig:Hole19}
	\end{figure}

Fig. \ref{fig:Hole11} and Fig. \ref{fig:Hole19} plot $|\psi_h|$ and $\nabla\times \bA_h$ at $t=10$, $50$, $200$ and $1000$ for $\bH=1.1$ and $\bH=1.9$, respectively.
As observed in Fig. \ref{fig:Hole11} and Fig. \ref{fig:Hole19}, the vortices start to penetrate the material near the four square holes. When $\bH$ becomes larger, more vortices are generated and triangulation with a much smaller mesh size is required to resolve the singularity of solutions, which coincides with the physical phenomenon.
Physically speaking, the penetrated magnetic flux will separate into the smallest bundle to guarantee the largest interface area since the interface energy in type-II superconductors is negative, and the vortices form a lattice because of the weak repulsive interactions among them. In long-time simulations, numerical schemes with high convergence accuracy may produce some nonphysical numerical phenomenon because of the lack of stability. This nonphysical phenomenon often happens near the reentrant corners when the applied magnetic field is strong. The vortex dynamics in Fig. \ref{fig:Hole11} and Fig. \ref{fig:Hole19} show that the proposed numerical scheme is robust and stable even when $\bH=1.9$.

\section{Conclusions}
\label{sec:conclusions}
In this paper, we propose a decoupled scheme for the TDGL equations under the temporal gauge by combining the ETD method and the backward Euler method for time discretization and finite element methods for spatial discretization.
Compared to the existing schemes for the TDGL equations, the proposed numerical scheme admits four advantages. Firstly, the scheme and all the energy stability analysis, MBP analysis and error estimate work for superconductors with complicated shapes. Secondly, an unconditional energy dissipation law is proved for the proposed scheme. This allows the application of an adaptive time-stepping strategy which can significantly speed up simulations compared to other numerical schemes for the TDGL equations in the literature using a fixed time step. Thirdly, the discrete MBP is proved for the order parameter which indicates the stability of the numerical scheme, while no other numerical schemes using finite element methods can preserve the MBP property theoretically. The analyzing technique can also be used in other problems with complex order parameters. Finally, the relatively low regularity of the numerical solutions prevents the appearance of some nonphysical numerical solutions.

For the discrete scheme in Remark \ref{remark:ppr} with gradient recovery techniques, the discrete MBP is also guaranteed under the mesh requirements in Assumption \ref{ass:mesh}. But how to preserve the energy dissipation law in a discrete sense is still an open problem. A major difficulty comes from the discretization of the coupling nonlinear terms in the equations for both the magnetic field and the order parameter. The proposed scheme \eqref{dis1}-\eqref{psieq} is only of first order in time. The fact that the differential operator $L[\bA]$ depends on the variable $\bA$ leads to the failure in constructing high order MBP-preserving numerical schemes using the standard ETD methods with second order accuracy. How to design an MBP-preserving numerical scheme with higher accuracy in time is also open, which requires some delicate treatment with respect to the coupling terms of the TDGL equations. A fast solver of numerical schemes is important in simulating the vortex motion of superconductors, especially when the shape of the superconductor is not smooth and a strong external magnetic field is applied. The design of fast solvers for the proposed numerical scheme and the theoretical analysis to guarantee the efficiency of the solver deserve deeper study.

\bibliographystyle{plain}
\bibliography{bibsuperConduct}

\end{document}